\def\bigO{\mathcal{O}}
\def\mig{\frac12}
\def\qRl{\mu}
\def\mn{  \left(
    \begin{array}{c}
      s\\
      m
    \end{array}      
    \right)
}
\def\mna#1#2{  \left(
    \begin{array}{c}
      #1\\
      #2
    \end{array}      
    \right)
}
\def\bigO{\mathcal{O}}
\journalname{Journal of Scientific Computing}
\begin{document}
\title{An approximate Lax-Wendroff-type procedure for high order
  accurate schemes for hyperbolic conservation laws} 
\titlerunning{An approximate Lax-Wendroff-type procedure}
\author{D.~Zorío \and A.~Baeza \and P.~Mulet}
\date{}

\institute{
Departament de Matemàtiques,  Universitat de
  València    (Spain); emails: david.zorio@uv.es, antonio.baeza@uv.es, mulet@uv.es. 
}

\leavevmode\thispagestyle{empty}

\noindent This version of the article has been accepted for publication, after a peer-review process, and is subject to Springer Nature’s AM terms of use, but is not the Version of Record and does not reflect post-acceptance improvements, or any corrections. The Version of Record is available online at: \url{https://doi.org/10.1007/s10915-016-0298-2}

\newpage

\maketitle

\begin{abstract}
  A  high order time stepping applied to spatial discretizations
  provided by the method of lines for hyperbolic conservations
  laws is presented. This procedure is related to the
  one  proposed in
  [Qiu, Shu, SIAM J. Sci. Comput., 2003] for numerically solving hyperbolic conservation laws. 
  Both methods are based on the conversion of time derivatives 
  to spatial derivatives through a Lax-Wendroff-type procedure, also
  known as 
  Cauchy-Kovalevskaya process. The original approach in  [Qiu, Shu, SIAM J. Sci. Comput., 2003]
  uses the exact expressions of the fluxes and their derivatives whereas 
  the new procedure computes suitable finite difference approximations
  of them  ensuring arbitrarily high order accuracy both in space and
  time as the
  original technique does, with a much simpler implementation and
  generically better   performance, since only flux evaluations are
  required and no symbolic computations of flux derivatives are needed.

\keywords{
Finite difference WENO schemes, Lax-Wendroff-type procedure, approximate
flux derivatives.
}
\end{abstract}

\section{Introduction}
The design of stable high order accurate schemes for hyperbolic
conservation laws following the method of lines approach is a
challenging problem. After the
successful ENO  
\cite{Harten1987,ShuOsher1988,ShuOsher1989} and WENO \cite{JiangShu96}
spatial semidiscretization
techniques, which
achieve arbitrarily high order spatial accuracy with excellent
results in terms of accuracy and performance, many research lines have
been focused on developing  high order accurate time
discretizations as well. There are other approaches, for instance ADER
schemes (see \cite{Toro2009} and references therein).

A commonly used strategy to obtain high order time discretizations 
 is the implementation of SSP Runge-Kutta schemes (as the popular TVD RK3), as
they provide high resolution approximations  and are stable 
with respect to the Total Variation under the same CFL conditions as
the forward Euler method. The TVD RK3 ODE solver achieves third order
accuracy with low storage requirements and a large CFL condition (see
\cite{ShuOsher1988}). For orders 
higher than three the  
stability properties are not so favorable, forcing either 
a smaller CFL restriction or the inclusion of additional stages in the
algorithm leading to an efficiency loss in both cases.

In order to simplify the formulation of high order accurate schemes in
time, Qiu and Shu \cite{QiuShu2003} developed a new time
discretization following a
Lax-Wendroff-type procedure and based in the Cauchy-Kovalevskaya idea, where
the numerical solution at a further time step is computed by a Taylor expansion
in time with the time derivatives transformed into spatial derivatives
using the equations.
The main benefit of such scheme with
respect to the Runge-Kutta schemes is that
only one WENO reconstruction procedure with flux-splitting is required
to be performed at each spatial cell per time step, regardless of the
order of the method, yielding an overall better performance.

However,
the main drawback of this approach is that the number of terms to
compute increases exponentially as the order increases, resulting
in very complex expressions that require the use of symbolic computation 
packages and involve a high computational cost.

In this paper we focus in developing an alternative version which,
instead of computing the exact expressions of the time derivatives of
the fluxes, approximates them through high order central
divided difference formulas. The main goal in the
formulation of this alternative version is to develop a scheme where
no exact flux derivatives are required, yielding a
straightforward implementation and a better performance in cases
where the complexity of the flux derivatives affects severely the
performance of the computation of high order terms in the exact
procedure.

This paper is organized as follows: In Section \ref{ens} we show the
details about the general framework of the equations we work with and
a general overview of the exact Lax-Wendroff-type procedure, Section
\ref{cka} stands for the formulation of the approximate Lax-Wendroff-type
procedure, where some important properties, such as the achievement of
the desired accuracy order and the conservation form, are
proven. Several numerical experiments comparing both techniques are
presented in Section \ref{nex} and some conclusions are drawn in
Section \ref{cnc}.

\section{Original Lax-Wendroff-type procedure}\label{ens}

The PDEs considered in this work are systems of
$d$-dimensional $m$ hyperbolic conservation laws:
\begin{equation}\label{hcl}
  \begin{aligned}
    u_t+\nabla \cdot f(u)&=0,
\end{aligned}
\end{equation}
where $ \nabla \cdot $ denotes the divergence operator with respect to
the spatial variables $x_1,\dots,x_d$ and 
$$u=u(x;t)\in\mathbb{R}^m,\quad
x=(x_1,\ldots,x_d),\quad
(x;t)\in\Omega\times\mathbb{R}^+\subseteq\mathbb{R}^d\times\mathbb{R}^+,
\quad f^{i}:\mathbb{R}^m\rightarrow\mathbb{R}^m,$$
with
$$u=\left[\begin{array}{c}
    u_1 \\
    \vdots \\
    u_m
    \end{array}\right],\quad f^i=\left[\begin{array}{c}
      f^i_1 \\
      \vdots \\
      f^i_m
      \end{array}\right],
    \quad f=\begin{bmatrix}f^1&\dots&f^d
      \end{bmatrix}.
      $$
System \eqref{hcl} is complemented with initial conditions $u(x;0)=u_0(x)$, $x\in\Omega$, and prescribed boundary
conditions. 

For the sake of simplicity, we start with the one-dimensional scalar
case ($d=m=1$). For the solution $u(x, t)$ of $u_t+f(u)_x=0$ on a fixed spatial
   grid $\{x_i\}$ with constant spacing $h=x_{i+1}-x_{i}$ and some time $t_n$
   from a temporal grid with spacing $\delta=\Delta t=t_{n+1}-t_n>0$,
   proportional to $h$, $\delta=\tau h$, where $\tau$ is dictated by
   stability restrictions (CFL condition), we use
the following notation for time derivatives of $u$ and $f(u)$:
   \begin{align*}
     u_{i,n}^{(l)}&=\frac{\partial^{l} u(x_i, t_n)}{\partial t^l}\\
     f_{i,n}^{(l)}&=\frac{\partial^{l} f(u)(x_i, t_n)}{\partial t^l}.
   \end{align*}

Our goal is to obtain an $R$-th order accurate numerical scheme, i.e., a scheme
with a local truncation error 
of order $R+1$, based on the Taylor expansion  of the solution $u$
from time $t_n$ to the next time $t_{n+1}$:
$$u_i^{n+1}=\sum_{l=0}^R\frac{\Delta
  t^l}{l!}u_{i,n}^{(l)}+\bigO(\Delta t^{R+1}).$$
For this purpose, we aim to define  corresponding approximations
\begin{equation}\label{eq:approximations}
\begin{aligned}
  \widetilde u_{i,n}^{(l)}&=u_{i,n}^{(l)}+\bigO(h^{R+1-l})\\
  \widetilde f_{i,n}^{(l)}&=f_{i,n}^{(l)}+\bigO(h^{R-l}),
\end{aligned}
\end{equation}
by recursion on $l$, assuming  (for a local truncation error
analysis) that  $\widetilde
u^{0}_{i,n}=u^{(0)}_{i,n}=u(x_i, t_n)$.

 The fact that $u$ solves the system of conservation laws implies that 
the time derivatives
$u_{i,n}^{(l)}$, $1\leq l\leq R$, can be written in terms of the spatial
divergence of $f_{i,n}^{(l-1)}$:
\begin{equation}\label{eq:ck}
\frac{\partial^lu}{\partial t^l}=\frac{\partial^{l-1}}{\partial
  t^{l-1}}
\big(u_t\big)=-\frac{\partial^{l-1}}{\partial t^{l-1}}\big(f(u)_x\big)=
-\left[\frac{\partial^{l-1}f(u)}{\partial t^{l-1}}\right]_x,
\end{equation}
and following the Cauchy-Kovalevskaya (or Lax-Wendroff-type for
second order) procedure and using Faà di Bruno's formula 
stated in Theorem \ref{th:faadibruno}, the time derivatives of 
the flux, $f_{i,n}^{(l-1)}$, can in turn be written in terms of 
some functions of $u_{i,n}^{(j)}$, $j<l$,
\begin{equation}\label{eq:35}
f_{i,n}^{(l-1)}=F_{l-1}(u_i^n,u_{i,n}^{(1)},\ldots,u_{i,n}^{(l-1)}).
\end{equation}
Putting together \eqref{eq:ck} and \eqref{eq:35} we conclude that 
the time derivatives $u_{i,n}^{(l)}$ of $u$
 can also be written in terms of 
some functions of its lower order derivatives $u_{i,n}^{(j)}$, $j<l$.

More specifically, assume we have numerical data,
$\{\widetilde u_i^n\}_{i=0}^{M-1}$, which approximates $u(\cdot, t_n)$ and
want to compute
an approximation for $u(\cdot, t_{n+1})$ at the same nodes, namely,
$\{\widetilde u_i^{n+1}\}_{i=0}^{M-1}$.
To approximate the first time derivative, $u_t=-f(u)_x$,
we use the Shu-Osher finite difference scheme \cite{ShuOsher1989}
with upwinded WENO spatial
reconstructions  \cite{JiangShu96} of order $2r-1$ in the flux function by
\begin{equation}\label{eq:40}
u_{i,n}^{(1)}=u_t(x_i,t_n)=-[f(u)]_x(x_i,t_n)
=-\frac{\hat{f}_{i+\frac{1}{2}}^n-\hat{f}_{i-\frac{1}{2}}^n}{h}+\mathcal{O}(h^{2r-1}),
\end{equation}
with
$$\hat{f}_{i+\mig}^n=\hat{f}(\widetilde u_{i-r+1}^n,\ldots,\widetilde
u_{i+r}^n)$$
being the WENO numerical fluxes of order $2r-1$, with
$r=\lceil\frac{R+1}{2}\rceil$, where $\lceil z\rceil$ denotes the
smallest integer larger or equal than $z$.

Much cheaper centered differences are used instead for the higher
order derivatives.
We expound the general procedure for a third order accurate scheme
($R=3$) for a scalar  one-dimensional conservation law.

First, we compute an approximation of $u_t$ by the procedure stated
above: 
$$\widetilde
u_{i,n}^{(1)}=-\frac{\hat{f}_{i+\mig}^n-\hat{f}_{i-\mig}^n}{h}.$$
We then compute
$$u_{tt}=[u_t]_t=[-f(u)_x]_t=-[f(u)_t]_x=-[f'(u)u_t]_x,$$
where $f'(u)u_t$ is now an approximately known expression for the
required nodes. We 
use then a second order centered difference in order to obtain the
approximation:
$$\widetilde u_{i,n}^{(2)}=-\frac{\widetilde
  f^{(1)}_{i+1,n}-\widetilde f^{(1)}_{i-1,n}}{2h},$$
where
$$\widetilde
f^{(1)}_{i,n}=F_1(\widetilde u_{i,n}^{(0)},\widetilde
u_{i,n}^{(1)})=f'(\widetilde u_{i,n}^{(0)})\widetilde u_{i,n}^{(1)},$$

Finally, we approximate the third derivative:
$$u_{ttt}=[u_t]_{tt}=[-f(u)_x]_{tt}=-[f(u)_{tt}]_x
=-\Big(f''(u)u_t^2+f'(u)u_{tt}\Big)_x,$$
where again the function $f''(u)u_t^2+f'(u)u_{tt}$ is approximately
known at the nodes and therefore $u_{ttt}$ can be
computed by second order accurate centered differences (note that 
in this case it would be required only a first order accurate
approximation; however, the order of centered approximations is always
even):
$$\widetilde u_{i,n}^{(3)}=-\frac{\widetilde
  f^{(2)}_{i+1,n}-\widetilde f^{(2)}_{i-1,n}}{2h},$$
where
$$\widetilde f^{(2)}_{i,n}=F_2(\widetilde u_{i,n}^{(0)},\widetilde
u_{i,n}^{(1)},\widetilde u_{i,n}^{(2)})
=f''(\widetilde u_{i,n}^{(0)})\cdot(\widetilde
u_{i,n}^{(1)})^2+f'(\widetilde u_{i,n}^{(0)})\cdot(\widetilde
u_{i,n}^{(2)})^2.$$
Once all the needed data is obtained, we advance in time by replacing
the terms of the
third order Taylor expansion in time of $u(\cdot, t_{n+1})$ by their
corresponding nodal approximations:
\begin{equation*}
  \begin{split}
    \widetilde u_i^{n+1}=\widetilde u_i^n+\Delta t\widetilde u_{i,n}^{(1)}+\frac{\Delta
      t^2}{2}\widetilde u_{i,n}^{(2)}+\frac{\Delta t^3}{6}\widetilde
    u_{i,n}^{(3)}.
  \end{split}
\end{equation*}

As we shall see, the above example can be extended to arbitrarily
high order time 
schemes through the computation of the suitable high order central
differences of the nodal values
$$\widetilde f^{(l)}_{i,n}=F_l(\widetilde u_{i,n}^{(0)},\widetilde
u_{i,n}^{(1)},\ldots,\widetilde u_{i,n}^{(l)})
=f_{i,n}^{(l)}+\mathcal{O}(h^{R-l}).$$
The generalization to multiple dimensions is straightforward, since 
now the Cauchy-Kovalevskaya procedure, being based on the fact that
$u_t=-\nabla\cdot f(u)$, yields
\begin{equation}\label{eq:cauchy-kovalevskaya}
\frac{\partial^l u}{\partial t^l}=-\nabla \cdot
\Big(\frac{\partial^{l-1}f(u) }{\partial t^{l-1}}\Big)
=-\sum_{i=1}^{d}\frac{\partial}{\partial x_i}\left(\frac{\partial^{l-1}f^{i}(u) }{\partial t^{l-1}}\right)
\end{equation}
and that the spatial reconstruction procedures are done separately for
each dimension. As 
for the case of the systems of equations, the time derivatives are now
computed through tensor products of the corresponding derivatives
of the Jacobian of the fluxes. The following result
\cite{faadibruno1857}, based on the previous works
\cite{TiburceAbadie1850,TiburceAbadie1852}, describes a procedure to compute
them through a generalization of the chain rule.  This formula has been already used 
in the context of numerical analysis, see e.g. \cite{Hairer1993}, \cite{Hickernell2008}, \cite{You2014}.

\begin{theorem}[Faà di Bruno's formula]\label{th:faadibruno}
  Let $f:\mathbb{R}^m\rightarrow\mathbb{R}^{p},$
  $u:\mathbb{R}\rightarrow\mathbb{R}^m$ $n$ times continuously
  differentiable. Then
\begin{equation}\label{eq:faadibruno}
\frac{d^nf(u(t))}{dt^n}=\sum_{s\in\mathcal{P}_n}\left(\begin{array}{c}
      n \\
      s
      \end{array}\right)f^{(|s|)}(u(t))D^su(t),
  \end{equation}    
  where $\mathcal{P}_{n}=\{ s\in\mathbb N^{n} /
\sum_{j=1}^{n} j s_j=n \}$, $|s|=\sum_{j=1}^{n} s_j$,
$\displaystyle\left(\begin{array}{c}
      n \\
      s
      \end{array}\right)=\frac{n!}{s_1!\cdots s_n!}$, 
    $D^s u(t)$ is an $m\times |s|$ matrix whose
    ($\sum\limits_{l<j}s_l+i$)-th column is given by
\begin{equation}\label{eq:ds}
  \displaystyle (D^s u(t))_{\sum\limits_{l<j}s_l+i}=\frac{1}{j!}\frac{\partial^{j}
      u(x)}{\partial t^j},\\
    \quad i=1,\dots,s_j,\quad j=1,\dots,n,
  \end{equation}
  and the action of the $k$-th derivative tensor of $f$ on a
    $m\times k$ matrix $A$ is  given by 
  \begin{equation}\label{eq:77}
      f^{(k)}(u)A=\sum_{i_1,\dots,i_k=1}^{m}\frac{\partial^k f}{\partial
    u_{i_1}\dots\partial u_{i_k}}(u) A_{i_1,1}\dots A_{i_{k},k}\in\mathbb{R}^{p}.
\end{equation}
\end{theorem}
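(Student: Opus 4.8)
The plan is to prove \eqref{eq:faadibruno} by induction on $n$, since the statement has a naturally recursive flavour that mirrors the Cauchy--Kovalevskaya process itself. The base case $n=1$ is just the chain rule: $\mathcal{P}_1=\{(1)\}$, so $|s|=1$, $\binom{1}{s}=1$, and $D^{(1)}u(t)$ is the single column $u'(t)$, whence the right-hand side reduces to $f^{(1)}(u(t))\,u'(t)=\frac{d}{dt}f(u(t))$.

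For the inductive step I would assume \eqref{eq:faadibruno} at order $n$ and differentiate both sides in $t$. The left-hand side becomes $\frac{d^{n+1}}{dt^{n+1}}f(u(t))$. On the right, each summand $\binom{n}{s}f^{(|s|)}(u(t))D^su(t)$ is a multilinear pairing of the tensor $f^{(|s|)}$ with the columns of $D^su$, so the product rule, together with the multilinearity of the action \eqref{eq:77}, splits its derivative into two kinds of contributions. In the first kind, differentiating the tensor factor $f^{(|s|)}(u(t))$ via the chain rule produces $f^{(|s|+1)}(u(t))$ acting on $D^su$ augmented by one extra column $u'(t)=\frac{1}{1!}u^{(1)}$; this is exactly the term indexed by $s+e_1\in\mathcal{P}_{n+1}$. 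In the second kind, differentiating one of the columns $\frac{1}{j!}u^{(j)}$ of $D^su$ yields $\frac{1}{j!}u^{(j+1)}=(j+1)\,\frac{1}{(j+1)!}u^{(j+1)}$, i.e. a column of the $(j+1)$-block carrying a factor $j+1$; this feeds the term indexed by $s-e_j+e_{j+1}\in\mathcal{P}_{n+1}$. A short check confirms that both reindexed multi-indices lie in $\mathcal{P}_{n+1}$, since $\sum_k k\,s_k$ increases by exactly one in each case.

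The core of the argument is then purely combinatorial: fix a target index $s'\in\mathcal{P}_{n+1}$ and collect all contributions landing on it. The first kind comes from $s=s'-e_1$ with weight $\binom{n}{s'-e_1}=s_1'\,\frac{n!}{\prod_k s_k'!}$. The second kind, from block $j$, comes from $s=s'+e_j-e_{j+1}$; since that $j$-block has $s_j=s_j'+1$ identical columns, each differentiation gives the same value (here one uses the symmetry of the derivative tensors, i.e. equality of mixed partials, to discard the column ordering), producing the weight $(s_j'+1)(j+1)\binom{n}{s'+e_j-e_{j+1}}=(j+1)s_{j+1}'\,\frac{n!}{\prod_k s_k'!}$. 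Summing over $j$ and using the defining constraint $\sum_k k\,s_k'=n+1$ gives
\begin{equation*}
s_1'\frac{n!}{\prod_k s_k'!}+\sum_{j\ge1}(j+1)s_{j+1}'\frac{n!}{\prod_k s_k'!}
=\big(s_1'+(n+1-s_1')\big)\frac{n!}{\prod_k s_k'!}=\frac{(n+1)!}{\prod_k s_k'!}=\binom{n+1}{s'},
\end{equation*}
which is precisely the coefficient demanded at order $n+1$.

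I expect the main obstacle to be bookkeeping rather than conceptual: one must verify that the two reindexings $s\mapsto s+e_1$ and $s\mapsto s-e_j+e_{j+1}$ together hit each $s'\in\mathcal{P}_{n+1}$ with the right source multiplicity, handle the boundary cases where some $s_k'=0$ (so the corresponding source multi-index simply does not occur, consistent with the vanishing of its weight), and justify that the multiplicity $s_j$ of identical columns combines with the factor $j+1$ correctly — this last point is where the symmetry of $f^{(k)}$ is indispensable. An alternative, less index-heavy route would expand $f(u(t+h))$ by composing the vector Taylor polynomial $u(t+h)=u(t)+\sum_{j\ge1}\frac{h^j}{j!}u^{(j)}(t)$ with the Taylor expansion of $f$ about $u(t)$ and reading off the coefficient of $h^n$; the multinomial count of how many ordered tuples $(j_1,\dots,j_k)$ with $\sum_\ell j_\ell=n$ realize a given $s$ reproduces $\binom{n}{s}$ directly, at the cost of carefully tracking Taylor remainders under the sole hypothesis that $f$ and $u$ are $n$ times continuously differentiable.
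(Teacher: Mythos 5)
Your proposal is correct and follows essentially the same route as the paper's own proof in Appendix A: induction on the order of differentiation, with the product rule and multilinearity splitting the derivative into a term that augments $s$ by $e_1$ (the paper's map $S_0$) and terms that shift $s\mapsto s-e_j+e_{j+1}$ with the factor $j+1$ (the paper's maps $S_j$), then invoking the symmetry of the derivative tensors and the constraint $\sum_k k\,s_k'=n+1$ to collect coefficients into $\binom{n+1}{s'}$, exactly as in the paper's equations \eqref{eq:450}--\eqref{eq:453}. The combinatorial bookkeeping you flag as the main obstacle is precisely what the paper formalizes via its Lemma \ref{lemma:3} and the explicit reindexing maps, and your weight computation matches theirs.
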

The general procedure for systems and multiple
dimensions is thus easily generalizable and further details about the
procedure can be found in \cite{QiuShu2003}.

\section{The approximate Lax-Wendroff-type procedure}\label{cka}

As reported by the authors of \cite{QiuShu2003}, the computation of
the exact nodal values of $f^{(k)}$ can be very
expensive as $k$ increases, since the number of  required operations
increases exponentially. Moreover, implementing it is costly and requires
large symbolic computations for each equation.

We now present an alternative, which is much less expensive for large $k$
and agnostic about the equation, in the sense that its only
requirement is the knowledge of the flux function.
This procedure also works in the multidimensional case and in the case
of systems as well (by
working componentwise). The rationale of our proposal relies on the
fact that the exact computation of $\frac{\partial^{l-1}f(u)
}{\partial t^{l-1}}$ in \eqref{eq:cauchy-kovalevskaya} from Fa\`a di
Bruno's formula \eqref{eq:faadibruno} requires the knowledge of the
partial derivatives $\frac{\partial^k f}{\partial
    u_{i_1}\dots\partial u_{i_k}}(u)$, $k=1,\dots,l-1$, which entails
  ad-hoc symbolic calculations, but in \eqref{eq:approximations} only
  approximations of $\frac{\partial^{l-1}f(u)
}{\partial t^{l-1}}$  are required, and these can be obtained by
suitable approximated  Taylor expansions in time and 
centered finite differences for the adequate approximation of high
order derivatives.

\subsection{Scheme formulation}

We next introduce some notation which will help in the description of
the approximate fluxes technique along this section. We assume a
 one-dimensional system for the sake of simplicity.

For a function $u\colon \mathbb R\to \mathbb R^{m}$, we denote the 
function on  the grid defined by a base
  point $a$ and grid space $h$ by
  \begin{equation*}
    G_{a,h}(u)\colon\mathbb{Z} \to \mathbb {R}^{m},\quad
    G_{a,h}(u)_i=u(a+ih).
  \end{equation*}
We denote by  $ \Delta^{p,q}_{h}$
the centered finite differences operator that  approximates $p$-th order
  derivatives to order $2q$ on grids with spacing $h$. For any $u$
  sufficiently differentiable, it satisfies:
  \begin{align}\label{eq:3}
    \Delta^{p,q}_{h}
    G_{a,h}(u)=u^{(p)}(a)+\alpha^{p,q}u^{(p+2q)}(a)h^{2q}+\bigO(h^{2q+2}),
  \end{align}
  see Proposition \ref{even} for more details.

We aim to define approximations $\widetilde u^{(k)}_{i,n} \approx
u^{(k)}_{i,n}$, $k=0,\dots,R$, recursively. We start the recursion with
\begin{equation}\label{eq:151}
  \begin{aligned}
    \widetilde u^{(0)}_{i,n}&=u_{i}^{n}\\
    \widetilde
    u^{(1)}_{i,n}&=-\frac{\hat{f}_{i+\frac{1}{2}}^n-\hat{f}_{i-\frac{1}{2}}^n}{h},
  \end{aligned}
\end{equation}   
  where $\hat f_{i+\mig}^{n}$ are computed by applying upwind WENO
  reconstructions to split fluxes obtained from the data $(u_{i}^{n})$
  at time step $n$ (see
  \cite{ShuOsher1989,DonatMarquina96,JiangShu96} for further details)

Associated to fixed $h, i, n$, once obtained $\widetilde
u^{(l)}_{i,n}$, $l=0,\dots,k$, in the recursive process, we define
the $k$-th degree approximated Taylor polynomial $T_k[h,i, n]$ by
\begin{align}\label{eq:taylor}
  T_k[h,i, n](\rho)=\sum_{l=0}^{k}\frac{\widetilde{u}^{(l)}_{i,n} }{l!} \rho^l.
\end{align}

By recursion,  for $k=1,\dots,R-1$, we define
\begin{equation}\label{eq:15}
  \begin{aligned}
    \widetilde f^{(k)}_{i,n} &=
    \Delta_{\delta}^{k,\left\lceil \frac{R-k}{2}\right\rceil}\Big(G_{0,\delta}\big(f(T_k[h, i,
    n])\big)\Big)\\
    \widetilde
    u^{(k+1)}_{i,n}&=-\Delta_h^{1,\left\lceil\frac{R-k}{2}\right\rceil}
    \widetilde f^{(k)}_{i+\cdot,n},
  \end{aligned}
\end{equation}   
where we denote by $\widetilde
f^{(k)}_{i+\cdot,n}$ the vector 
given by the elements $(\widetilde f^{(k)}_{i+\cdot,n})_{j}=\widetilde
f^{(k)}_{i+j,n}$,  recall that $\delta=\Delta t $ and, as previously mentioned,
$\widetilde u_{i,n}^{(0)}=u_{i}^{n}$ is 
the data at the $n$-th time step. With all these ingredients, the
proposed scheme is:
\begin{equation}\label{eq:60}
u_i^{n+1}=u_{i}^{n}+\sum_{l=1}^R\frac{\Delta
  t^l}{l!}\widetilde u_{i,n}^{(l)}.
\end{equation}

The algorithm for the approximated Lax-Wendroff-type procedure can
  be summarized as follows:

\begin{enumerate}
\item Given the numerical data from the time step $t_n$, $\widetilde
  u_{i,n}^{(0)}=u_{i}^{n}$, compute $$\widetilde
  u_{i,n}^{(1)}=-\frac{\hat{f}_{i+\frac{1}{2},n}-\hat{f}_{i-\frac{1}{2},n}}{h}$$
  through upwind spatial WENO reconstructions.
\item 
For $1\leq k \leq R-1$, compute $\widetilde u^{(k+1)}_{i,n}$ from the known data
$\widetilde u^{(l)}_{i,n}, 0\leq l \leq k$, through the following steps
  \begin{enumerate}
  \item Approximate the $k$-th time derivative of $f(u)$ is
     using central differences, as
    indicated in Equation
    (\ref{eq:15}): $$\widetilde f^{(k)}_{i,n} =
    \Delta_{\delta}^{k,\left\lceil \frac{R-k}{2}\right\rceil}\Big(G_{0,\delta}\big(f(T_k[h, i,
    n])\big)\Big),$$
    with
    $$T_k[h,i, n](\rho)=\sum_{l=0}^{k}\frac{\widetilde{u}^{(l)}_{i,n}
    }{l!} \rho^l.$$
  \item Compute nodal approximations of the $(k+1)$-th time derivative of $u$ at $t_n$
    through the suitable central differences,
    according again to Equation (\ref{eq:15}):
    $$\widetilde u^{(k+1)}_{i,n}=-\Delta_h^{1,\left\lceil\frac{R-k}{2}\right\rceil}
    \widetilde f^{(k)}_{i+\cdot,n}.$$
  \end{enumerate}
\item Compute the updated solution at time $t_{n+1}$ by \eqref{eq:60}:
  $$u_i^{n+1}=u_{i}^{n}+\sum_{l=1}^R\frac{\Delta
  t^l}{l!}\widetilde u_{i,n}^{(l)}.$$
\end{enumerate}

\subsection{Fifth order scheme example}

For the sake of illustration, we next detail the specific
numerical scheme that we use for the numerical
experiments included in this paper, detailing how the
aforementioned recursive procedure is performed. In order to simplify as most
as possible the notation, we only show it for the scalar 1D case, as
in the case of systems it consists on working through components and
the multidimensional case working through each line and respective
flux, yielding a rather
simple generalization. We use a fifth
order accurate in space scheme ($r=3$), with fifth order accurate
time discretizations ($R=5$), yielding a fifth order accurate scheme.

The scheme to obtain the upwinded approximation of the first
derivative, $u^{(1)}$, is based on the Shu-Osher finite differences of cell
interfaces,
$$\widetilde u_{i,n}^{(1)}=-\frac{\hat{f}_{i+\frac{1}{2},n}-\hat{f}_{i-\frac{1}{2},n}}{h},\quad0\leq
i<M,$$
where each interface value $\hat{f}_{i-\frac{1}{2},n},$ $0\leq i<M$,
is computed through upwinded fifth order WENO reconstructions. In order to
obtain the last three approximations from both corners we need three
additional ghost cell values at each side of the
stencil, $u_{-3,n},u_{-2,n},u_{-1,n}$ and
$u_{M,n},u_{M+1,n},u_{M+2,n}$, which are obtained through the suitable
numerical boundary conditions, involving the computational domain and
the boundary conditions themselves, if any.

Below it is expounded how to obtain the higher order derivatives
through the flux approximation procedure. 

We compute nodal approximations of the second order time
derivative of $u$, $\widetilde u^{(2)}$, by performing the following
operation:
\begin{equation*}
  \begin{split}
    \widetilde
    f_{i,n}^{(1)}&=\Delta_{\delta}^{1,\left\lceil
        \frac{5-1}{2}\right\rceil}(G_{0,\delta}\big(f(T_1[h,i,n]))\big)=\Delta_{\delta}^{1,2}(G_{0,\delta}\big(f(T_1[h,i,n]))\big)\\
    &=\frac{\varphi_{i,n}^1(2\delta)-8\varphi_{i,n}^1(\delta)
      +8\varphi_{i,n}^1(-\delta)-\varphi_{i,n}^1(-2\delta)}{12\delta},\quad
-2\leq i<M+2,
  \end{split}
\end{equation*}
where
$$\varphi_{i,n}^1(\rho)=f(\widetilde u_i^n+\rho \widetilde u^{(1)}_{i,n}).$$
We need thus to apply previously boundary conditions in order to
obtain two ghost cell values at both sides,
$\widetilde u_{-2}^{(1)},\widetilde u_{-1}^{(1)}$ and $\widetilde
u_{M}^{(1)},\widetilde u_{M+1}^{(1)}$.

We can now define the approximation of the second time derivative of
$u$ as the following fourth order accurate central divided difference
$$\widetilde u^{(2)}_i=-\frac{\widetilde f_{i-2,n}^{(1)}-8\widetilde
  f_{i-1,n}^{(1)}+8\widetilde f_{i+1,n}^{(1)}-\widetilde f_{i+2,n}^{(1)}}{12h}.$$
The nodal approximations of $u^{(3)}$ are obtained in a similar
fashion:
\begin{equation*}
  \begin{split}
    \widetilde
    f_{i,n}^{(2)}&=\Delta_{\delta}^{2,\left\lceil
        \frac{5-2}{2}\right\rceil}(G_{0,\delta}\big(f(T_2[h,i,n]))\big)
    =\Delta_{\delta}^{2,2}(G_{0,\delta}\big(f(T_2[h,i,n]))\big)\\
    &=\frac{-\varphi_{i,n}^2(2\delta)+16\varphi_{i,n}^2(\delta)
      -30\varphi_{i,n}^2(0)
      +16\varphi_{i,n}^2(-\delta)-\varphi_{i,n}^2(-2\delta)}{12\delta^2},
    \\&-2\leq i<M+2,
  \end{split}
\end{equation*}
where
$$\varphi_{i,n}^2(\rho)=f(\widetilde u_i^n+\rho \widetilde
u^{(1)}_{i,n}+\frac{\rho^2}{2}\widetilde u^{(2)}_{i,n}),$$
by previously having computed through boundary conditions
$\widetilde u_{-2}^{(2)},\widetilde u_{-1}^{(2)}$ and $\widetilde
u_{M}^{(2)},\widetilde u_{M+1}^{(2)}$
and, again, use a fourth order central divided difference to
approximate the third time derivative of $u$:
$$\widetilde u^{(3)}_i=-\frac{\widetilde f_{i-2}^{(2)}-8\widetilde
  f_{i-1}^{(2)}+8\widetilde f_{i+1}^{(2)}-\widetilde f_{i+2}^{(2)}}{12h}.$$
Since for the fourth and fifth time derivative of $u$ it is only required
approximations of second and first order, respectively, we approximate
the corresponding time derivatives of the flux through second order
central differences and perform as well second order central
differences between them.

On the one hand, we have
\begin{equation*}
  \begin{split}
    \widetilde
    f_{i,n}^{(3)}&=\Delta_{\delta}^{3,\left\lceil
        \frac{5-3}{2}\right\rceil}(G_{0,\delta}\big(f(T_3[h,i,n]))\big)
    =\Delta_{\delta}^{3,1}(G_{0,\delta}\big(f(T_3[h,i,n]))\big)\\
    &=\frac{\varphi_{i,n}^3(2\delta)-2\varphi_{i,n}^3(\delta)
      +2\varphi_{i,n}^3(-\delta)-\varphi_{i,n}^3(-2\delta)}{2\delta^3},\quad
-1\leq i<M+1,
  \end{split}
\end{equation*}
with
$$\varphi_{i,n}^3(\rho)=f(\widetilde u_i^n+\rho \widetilde
u^{(1)}_{i,n}+\frac{\rho^2}{2}\widetilde
u^{(2)}_{i,n}+\frac{\rho^3}{6}\widetilde u^{(3)}_{i,n}),$$
where $\widetilde u^{(3)}_{-1}$ and $\widetilde u^{(3)}_{M}$ have been
computed using the adequate numerical boundary conditions.

Then we define
$$\widetilde u^{(4)}_{i,n}=-\frac{\widetilde f^{(3)}_{i+1,n}-\widetilde
  f^{(3)}_{i-1,n}}{2h}.$$
On the other hand,
\begin{equation*}
  \begin{split}
    \widetilde
    f_{i,n}^{(4)}&=\Delta_{\delta}^{4,\left\lceil
        \frac{5-4}{2}\right\rceil}(G_{0,\delta}\big(f(T_4[h,i,n]))\big)=\Delta_{\delta}^{4,1}(G_{0,\delta}\big(f(T_4[h,i,n]))\big)\\
    &=\frac{\varphi_{i,n}^4(2\delta)-4\varphi_{i,n}^4(\delta)
      +6\varphi_{i,n}^4(0)
      -4\varphi_{i,n}^4(-\delta)+\varphi_{i,n}^4(-2\delta)}{\delta^4},\quad
-1\leq i<M+1,
  \end{split}
\end{equation*}
with
$$\varphi_{i,n}^4(\rho)=f(\widetilde u_i^n+\rho \widetilde
u^{(1)}_{i,n}+\frac{\rho^2}{2}\widetilde
u^{(2)}_{i,n}+\frac{\rho^3}{6}\widetilde
u^{(3)}_{i,n}+\frac{\rho^4}{24}\widetilde u^{(4)}_{i,n}),$$
where $\widetilde u^{(4)}_{-1}$ and $\widetilde u^{(4)}_{M}$ are
obtained through numerical boundary conditions.

We then define
$$\widetilde u^{(5)}_{i,n}=-\frac{\widetilde
  f^{(4)}_{i+1,n}-\widetilde f^{(4)}_{i-1,n}}{2h}.$$
The next time step is then computed through the fifth order Taylor
expansion replacing the derivatives with their corresponding
approximations:
$$\widetilde u_i^{n+1}=\widetilde u_i^n+\Delta t\widetilde
u_{i,n}^{(1)}+\frac{\Delta t^2}{2}\widetilde
u_{i,n}^{(2)}+\frac{\Delta t^3}{6}\widetilde u_{i,n}^{(3)}+
\frac{\Delta t^4}{24}\widetilde u_{i,n}^{(4)}+\frac{\Delta
  t^5}{120}\widetilde u_{i,n}^{(5)}.$$

\subsection{Theoretical results}

The next result is the main result of this paper. Its proof is
deferred to appendix \ref{appendixb}.

\begin{proposition}\label{prop:1}
  The scheme defined by \eqref{eq:15} and \eqref{eq:60} is $R$-th
  order accurate. 
\end{proposition}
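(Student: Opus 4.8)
The plan is to reduce Proposition \ref{prop:1} to the accuracy estimates \eqref{eq:approximations} and then to establish those by induction on the order of the derivative. Indeed, if $\widetilde u_{i,n}^{(l)}=u_{i,n}^{(l)}+\bigO(h^{R+1-l})$ for $l=1,\dots,R$, then the local truncation error of \eqref{eq:60} is
$$u(x_i,t_{n+1})-u_i^{n+1}=\sum_{l=1}^{R}\frac{\Delta t^l}{l!}\big(u_{i,n}^{(l)}-\widetilde u_{i,n}^{(l)}\big)+\bigO(\Delta t^{R+1}),$$
and, since $\Delta t=\tau h$, each summand is $\bigO(h^l)\,\bigO(h^{R+1-l})=\bigO(h^{R+1})$, so the scheme is $R$-th order accurate. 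It thus suffices to prove \eqref{eq:approximations}, which I would do in the scalar one-dimensional case, the systems and multidimensional versions following componentwise and line by line exactly as in \eqref{eq:15}.

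I would argue by induction on the order $k$. The base cases are $\widetilde u_{i,n}^{(0)}=u(x_i,t_n)$, which is exact, and $\widetilde u_{i,n}^{(1)}=u_{i,n}^{(1)}+\bigO(h^{2r-1})$ from \eqref{eq:151}; since $r=\lceil (R+1)/2\rceil$ forces $2r-1\ge R$, the estimate for $l=1$ holds. Assume now $\widetilde u_{i,n}^{(l)}=u_{i,n}^{(l)}+\bigO(h^{R+1-l})$ for $l=0,\dots,k$, and split the recursion step \eqref{eq:15} into its two operations. For the flux derivative, set $\psi_i(\rho)=f(T_k[h,i,n](\rho))$. Since $T_k$ is a polynomial with $T_k^{(j)}(0)=\widetilde u_{i,n}^{(j)}$ for $j\le k$, Fa\`a di Bruno's formula (Theorem \ref{th:faadibruno}) gives $\psi_i^{(k)}(0)=F_k(\widetilde u_{i,n}^{(0)},\dots,\widetilde u_{i,n}^{(k)})$, which by \eqref{eq:35}, the inductive hypothesis and the smoothness of $F_k$ equals $f_{i,n}^{(k)}+\bigO(h^{R+1-k})$. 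Applying the difference expansion \eqref{eq:3} (Proposition \ref{even}) to the smooth function $\psi_i$, with $\delta=\tau h$ and $q=\lceil (R-k)/2\rceil$ so that $2q\ge R-k$, yields
$$\widetilde f_{i,n}^{(k)}=\Delta_\delta^{k,q}\big(G_{0,\delta}(\psi_i)\big)=\psi_i^{(k)}(0)+\alpha^{k,q}\psi_i^{(k+2q)}(0)\,\delta^{2q}+\bigO(\delta^{2q+2})=f_{i,n}^{(k)}+\bigO(h^{R-k}),$$
which is the second estimate in \eqref{eq:approximations}.

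For the second operation I apply $\widetilde u_{i,n}^{(k+1)}=-\Delta_h^{1,q}\widetilde f_{i+\cdot,n}^{(k)}$ and compare it with the exact Cauchy--Kovalevskaya identity $u_{i,n}^{(k+1)}=-\partial_x f_{i,n}^{(k)}$ from \eqref{eq:ck}. I would decompose $\widetilde f_{i,n}^{(k)}=f_{i,n}^{(k)}+e_i+c_i\,h^{2q}+\bigO(h^{2q+2})$, where $e_i=\psi_i^{(k)}(0)-f_{i,n}^{(k)}=\bigO(h^{R+1-k})$ and $c_i=\alpha^{k,q}\tau^{2q}\psi_i^{(k+2q)}(0)=\bigO(1)$, and treat the pieces separately. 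Applied to the exact, smooth grid function $f_{i+\cdot,n}^{(k)}$, the operator $\Delta_h^{1,q}$ reproduces $\partial_x f_{i,n}^{(k)}+\bigO(h^{2q})$ by \eqref{eq:3}; the small piece $e_i$ contributes at most $\bigO(h^{R+1-k}/h)=\bigO(h^{R-k})$; and the last remainder contributes $\bigO(h^{2q+1})$. Collecting these gives $\widetilde u_{i,n}^{(k+1)}=u_{i,n}^{(k+1)}+\bigO(h^{R-k})=u_{i,n}^{(k+1)}+\bigO(h^{R+1-(k+1)})$, closing the induction.

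The delicate step, which I expect to be the main obstacle, is the contribution of $c_i\,h^{2q}$. As $c_i=\bigO(1)$ rather than small, the naive pointwise bound $\Delta_h^{1,q}(c_{i+\cdot})=\bigO(1/h)$ would leave an $\bigO(h^{2q-1})=\bigO(h^{R-k-1})$ term in the even case $2q=R-k$, losing exactly one order and degrading the scheme to order $R-1$. The resolution is structural: $c_i$ is a fixed smooth function of the grid values $\widetilde u_{i,n}^{(0)},\dots,\widetilde u_{i,n}^{(k)}$, and these vary between neighbouring nodes only by $\bigO(h)$, since the exact samples $u^{(l)}(x_i,t_n)$ do and the errors are $\bigO(h^{R+1-l})$, hence $\bigO(h)$. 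Consequently $c_{i+j}-c_i=\bigO(h)$ across the fixed stencil, and because $\Delta_h^{1,q}$ annihilates constants one obtains $\Delta_h^{1,q}(c_{i+\cdot})=\bigO(1)$, so the term is genuinely $\bigO(h^{2q})=\bigO(h^{R-k})$. Turning this observation into a clean induction---namely that every quantity generated by \eqref{eq:15} is a Lipschitz grid function with $\bigO(h)$ increments, so that spatial differencing never amplifies a leading truncation error by a factor $1/h$---is the technical heart of the argument.
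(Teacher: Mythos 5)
Your proof is correct, and its skeleton is the same as the paper's (Appendix \ref{appendixb}): induction on the derivative order, splitting each step of \eqref{eq:15} into a Fa\`a di Bruno consistency error plus a centered-difference truncation error, and summing the Taylor polynomial at the end. The genuine difference lies in the induction hypothesis and hence in the mechanism at the critical step. The paper propagates \emph{structured} expansions: \eqref{eq:47} asserts $\widetilde u^{(k)}_{i,n}=u^{(k)}_{i,n}+c^k(x_i,t_n)h^{R-k+1}+\mathcal{O}(h^{R-k+2})$ with $c^k$ a continuously differentiable function, and likewise \eqref{eq:46} for $\widetilde f^{(k)}_{i,n}$ with smooth coefficients $a^k,b^k$; applying $\Delta_h^{1,q}$ to grid samples of a smooth coefficient then reproduces its $x$-derivative plus higher-order terms by \eqref{eq:3}, i.e.\ an $\mathcal{O}(1)$ factor, so the $1/h$ in the operator is never felt. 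You instead carry only the unstructured bounds \eqref{eq:approximations} and defuse the single dangerous term ($c_i h^{2q}$ with $2q=R-k$, correctly identified as arising only when $R-k$ is even) by observing that $c_i$ is a smooth function of grid values with $\mathcal{O}(h)$ increments and that $\Delta_h^{1,q}$ annihilates constants, whence $\Delta_h^{1,q}(c_{i+\cdot})=\mathcal{O}(1)$. Your closing worry that this requires a strengthened induction ("every quantity is a Lipschitz grid function") is unfounded: the $\mathcal{O}(h)$-increment property is automatic from your own hypothesis, since $u^{(l)}(x_{i+1},t_n)-u^{(l)}(x_i,t_n)=\mathcal{O}(h)$ by smoothness of the exact solution and the perturbations are $\mathcal{O}(h^{R+1-l})=\mathcal{O}(h^{2})$ for $l\le k\le R-1$; together with uniformity in $i$ of the $\mathcal{O}$ constants (all quantities being smooth functions of uniformly bounded grid values), your weaker hypothesis closes the induction as stated. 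In comparison, the paper's structured induction is heavier to formulate but makes every step a routine application of \eqref{eq:3} and yields the leading error coefficients $c^{k+1}$ in closed form, while your route is leaner and isolates more sharply the one place where the naive $1/h$ bound would lose an order.
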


The following result yields optimal central finite difference
approximations to derivatives of any order. We denote by $\lfloor z
\rfloor$ the integer part of  $z$.

\begin{proposition}\label{even}
  For any  $p, q\in\mathbb{N}$, there exist  $\beta_{l}^{p,q}$,
  $l=0,\dots,s:=\lfloor \frac{p-1}{2}\rfloor+q$ such that
  \begin{align}\label{eq:3bis}
    \Delta^{p,q}_{h} v=\frac{1}{h^p}\sum_{l=0}^{s}\beta_{l}^{p,q}(v_l+(-1)^pv_{-l})
  \end{align}
  satisfies \eqref{eq:3}.
\end{proposition}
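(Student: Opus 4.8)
The plan is to substitute the ansatz \eqref{eq:3bis} into a Taylor expansion, read off a linear system for the unknown weights $\beta_l^{p,q}$, and reduce its solvability to the invertibility of a Vandermonde matrix.

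First I would set $v_l=u(a+lh)$ and Taylor-expand each $u(a\pm lh)$ about $a$. The symmetrized combination $v_l+(-1)^pv_{-l}$ annihilates every monomial $l^kh^k$ whose index $k$ has parity opposite to that of $p$, since the factor $1+(-1)^{p+k}$ vanishes there; only the indices $k\equiv p\,(2)$ survive, each carrying a factor $2$. Writing the moments $M_k:=\sum_{l=0}^{s}\beta_l^{p,q}\,l^k$, this yields
\begin{equation*}
  \Delta^{p,q}_h G_{a,h}(u)=2\sum_{\substack{k\ge 0\\ k\equiv p\,(2)}}\frac{u^{(k)}(a)}{k!}\,h^{k-p}M_k .
\end{equation*}
To match \eqref{eq:3} I would impose three families of conditions: $M_k=0$ for every $k$ of the parity of $p$ with $0\le k<p$ (cancelling the singular negative powers of $h$); the normalization $2M_p/p!=1$; and $M_k=0$ for $k=p+2,p+4,\dots,p+2(q-1)$ (pushing the leading error out to order $h^{2q}$). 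A counting check shows the resulting system is square: for even $p$ there are $\tfrac{p}{2}+q=s+1$ equations in $\beta_0,\dots,\beta_s$, whereas for odd $p$ the center weight $\beta_0$ drops out of every equation (its column is multiplied by $0^k$ with $k\ge 1$), leaving $s$ equations in $\beta_1,\dots,\beta_s$; in that case I would simply set $\beta_0^{p,q}=0$.

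The crux is solvability, and here I would exploit that only even powers of $l$ appear when $p$ is even and only odd powers when $p$ is odd. Substituting $y_l=l^2$ turns the coefficient matrix, after factoring out one power of $l$ per column in the odd case (i.e. working with $\gamma_l=\beta_l l$), into a Vandermonde matrix in the nodes $y_0<y_1<\dots<y_s$ (respectively $y_1<\dots<y_s$). Since the squares $l^2$ are pairwise distinct, this matrix is invertible and the weights $\beta_l^{p,q}$ are uniquely determined, which establishes existence.

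Finally, with all lower moments pinned down, the first term surviving beyond the normalization is the $k=p+2q$ contribution, giving $\alpha^{p,q}=2M_{p+2q}/(p+2q)!$, while the next surviving index $k=p+2q+2$ produces a remainder $\bigO(h^{2q+2})$; this is precisely \eqref{eq:3}. The only genuine obstacle is the bookkeeping that renders the system square and exposes the Vandermonde structure — in particular the separate treatment of the vanishing center weight when $p$ is odd — since everything else is a direct Taylor-matching computation.
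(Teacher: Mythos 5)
Your proof is correct, and it reaches \eqref{eq:3} by a route that is organized differently from the paper's. The paper keeps all $2s+1$ coefficients $\beta_{-s}^{p,q},\dots,\beta_{s}^{p,q}$ as unknowns, imposes the full set of moment conditions \eqref{eq:9} (namely $\sum_{l=-s}^{s}\beta_l^{p,q}l^r=p!\,\delta_{rp}$ for $r=0,\dots,2s$), and obtains existence and uniqueness from a single Vandermonde system on the distinct nodes $-s,\dots,s$; the parity relations $\beta_{-l}^{p,q}=(-1)^p\beta_l^{p,q}$ are then \emph{deduced} from uniqueness via a homogeneous Vandermonde subsystem, and the symmetric form \eqref{eq:3bis} is recovered at the very end by halving $\beta_0^{p,q}$ when $p$ is even. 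You instead build the parity into the ansatz from the outset, which halves the number of unknowns and leaves a smaller square system of moment equations; its solvability no longer follows from the standard Vandermonde determinant but from a Vandermonde matrix in the squared nodes $y_l=l^2$ (after factoring one power of $l$ out of each column when $p$ is odd, and discarding the center weight, which multiplies $v_0-(-1)\cdot v_0$'s vanishing combination). Your parity bookkeeping --- the count showing the system is square in both parity cases, and the convention $\beta_0^{p,q}=0$ for odd $p$ --- matches the paper's closing remark, and your identification of the leading error coefficient $\alpha^{p,q}=2M_{p+2q}/(p+2q)!$ agrees with the paper's $\alpha_m^{p,q}$. The trade-off: your argument is more economical and lands directly on the stated symmetric form, while the paper's argument buys slightly more, namely that the symmetric formula is the \emph{unique} $(2s+1)$-point formula satisfying the order conditions, so symmetry is a conclusion rather than a hypothesis.
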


\begin{proof}
We set
\begin{equation}\label{eq:13}
  \Delta_{h}^{p,q}v=\frac{1}{h^p}\sum_{l=-s}^{s}\beta_{l}^{p,q} v_{l},\quad s=\left\lfloor
    \frac{p-1}{2}\right\rfloor +q
\end{equation}
for $\beta_{l}^{p,q}$ to determine such that
\begin{align*}
  \psi(h)=\psi^{p,q}(h)=\sum_{l=-s}^{s}\beta_{l}^{p,q} u(a+lh),
\end{align*}     
satisfies
\begin{equation}\label{eq:8}
  \psi^{(r)}(0)=0,\quad r=0,\dots,2s, r\neq p, \quad \psi^{(p)}(0)=p!u^{(p)}(a).
\end{equation}
Since 
\begin{align*}
  \psi^{(r)}(0)=\sum_{l=-s}^{s}\beta_{l}^{p,q} l^r u^{(r)}(a),
\end{align*}
\eqref{eq:8} is equivalent to  the system of $2s+1$ equations and
$2s+1$ unknowns
\begin{equation}\label{eq:9}
  \begin{aligned}
    \sum_{l=-s}^{s}\beta_{l}^{p,q} l^r &=0,\quad r=0,\dots,2s, r\neq p\\
    \sum_{l=-s}^{s}\beta_{l}^{p,q} l^p &=p!,
  \end{aligned}
\end{equation} 
whose coefficient matrix is a Vandermonde invertible matrix, and  it
thus have a unique solution. We see now that if $p$ is even then
$\beta_{-l}^{p,q} = \beta_{l}^{p,q}$, $l=1,\dots,s$, and if it is odd then
$\beta_{-l}^{p,q} = -\beta_{l}^{p,q}$, $l=0,\dots,s$. For the first
case \eqref{eq:9} yields
\begin{align*}
  \sum_{l=-s}^{s}\beta_{l}^{p,q} l^r &=0,\quad r=1,3,\dots,2s-1\\
  \sum_{l=1}^{s}(\beta_{l}^{p,q}-\beta_{-l}^{p,q}) l^r &=0,\quad
  r=1,3,\dots,2s-1,
\end{align*}    
which is a homogeneous system of $s$ equations with $s$ unknowns,
with an invertible (Vandermonde) matrix, therefore
$\beta_{l}^{p,q}-\beta_{-l}^{p,q}=0$, $l=1,\dots,s$. The case for odd
$p$ is handled similarly.

With this, we have from a Taylor expansion of $\psi$ that:
\begin{align*}
  \Delta_{h}^{p,q}G_{a,h}u&=\frac{1}{h^p}\psi(h)=
  \frac{1}{h^p}(\frac{\psi^{(p)}(0)}{p!}h^p+\sum_{r=2s+1}^{\infty}
  \frac{\psi^{(r)}(0)}{r!} h^r)\\
  &=
  u^{(p)}(0)+\sum_{r=2s+1}^{\infty}
  \sum_{l=-s}^{s}\beta_{l}^{p,q}l^r
  \frac{u^{(r)}(a)}{r!} h^{r-p}\\
  &=
  u^{(p)}(0)+\sum_{r=2s+1}^{\infty}
  \sum_{l=1}^{s}(\beta_{l}^{p,q}+(-1)^r\beta_{-l}^{p,q})l^r
  \frac{u^{(r)}(a)}{r!} h^{r-p}
\end{align*}
Now, if $p$ is even, then $\beta_{l}^{p,q}=\beta_{-l}^{p,q}$ and,
therefore, the only remaining terms are those with even $r$:
\begin{align*}
  &\sum_{r=2s+1}^{\infty}
  \sum_{l=1}^{s}(\beta_{l}^{p,q}+(-1)^r\beta_{-l}^{p,q})l^r
  \frac{u^{(r)}(a)}{r!} h^{r-p} \\
  &= \sum_{m=s+1}^{\infty}  \alpha_{m}^{p,q}u^{(2m)}(a)
  h^{2m-p},\quad 
  \alpha_{m}^{p,q}=  \frac{2}{(2m)!}\sum_{l=1}^{s}\beta_{l}^{p,q}l^{2m}.
\end{align*}
On the other hand, if $p$ is odd, then $\beta_{-l}^{p,q}=-\beta_{l}^{p,q}$ and,
therefore, the only remaining terms are those with odd $r$:
\begin{align*}
  &\sum_{r=2s+1}^{\infty}
  \sum_{l=1}^{s}(\beta_{l}^{p,q}+(-1)^r\beta_{-l}^{p,q})l^r
  \frac{u^{(r)}(a)}{r!} h^{r-p} \\
  &= \sum_{m=s}^{\infty}  \alpha_{m}^{p,q}u^{(2m+1)}(a)
  h^{2m+1-p},\quad 
  \alpha_{m}^{p,q}=  \frac{2}{(2m)!}\sum_{l=1}^{s}\beta_{l}^{p,q}l^{2m+1}.
\end{align*}
One can check that the definition of $s$ gives that the smallest exponent
in the remainder terms is $2q$. The result follows easily if one
redefines $\beta_0^{p,q}=\beta_0^{p,q}/2$ for even $p$ (for odd $p$ it
is 0).
\qed
\end{proof}

Finally, we next present a result which ensures that our scheme, being based
on  approximations of  flux derivatives, is conservative.
\begin{theorem}
  The scheme resulting of the flux approximation procedure can be
  written in conservation form, namely,
  \begin{equation}\label{eq:12}
    u_{i}^{n+1}=   u_{i}^{n}-\frac{\Delta t}{h}\big(
    \hat g_{i+\mig}^{n}-\hat g_{i-\mig}^{n}\big).
  \end{equation}
\end{theorem}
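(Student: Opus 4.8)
The plan is to show that each term appearing in the update \eqref{eq:60} is, or can be rewritten as, a difference of interface values, and then to absorb all of these contributions into a single numerical flux $\hat g_{i+\mig}^n$. The term $l=1$ is already in the desired form: by \eqref{eq:151}, $\Delta t\,\widetilde u_{i,n}^{(1)}=-\frac{\Delta t}{h}\big(\hat f_{i+\mig}^n-\hat f_{i-\mig}^n\big)$. For the remaining terms I write $l=k+1$ with $1\le k\le R-1$; by \eqref{eq:15} the quantity $\widetilde u_{i,n}^{(k+1)}=-\Delta_h^{1,\lceil(R-k)/2\rceil}\,\widetilde f_{i+\cdot,n}^{(k)}$ is obtained by applying a centered first-derivative difference operator to the grid function $j\mapsto \widetilde f_{j,n}^{(k)}$. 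The crux is therefore to prove that any such operator telescopes into a flux difference.

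The key lemma I would establish is the following elementary telescoping fact. If $Dv_i=\frac{1}{h}\sum_{l=-s}^{s}a_l v_{i+l}$ with $\sum_{l=-s}^{s}a_l=0$, then setting $b_m=\sum_{k=m}^{s}a_k$ one has $b_m-b_{m+1}=a_m$ and $b_{-s}=\sum_{k=-s}^{s}a_k=0$, so that
\[
Dv_i=\frac{1}{h}\big(\hat v_{i+\mig}-\hat v_{i-\mig}\big),\qquad \hat v_{i+\mig}=\sum_{m=-s+1}^{s}b_m\,v_{i+m}.
\]
To apply this to $\Delta_h^{1,q}$ with $q=\lceil(R-k)/2\rceil$, I only need the consistency condition $\sum_l \beta_l^{1,q}=0$ for the coefficients of \eqref{eq:3bis}. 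This is immediate from Proposition \ref{even}: the defining Vandermonde system \eqref{eq:9} includes, for $p=1$, the equation with $r=0$ (present precisely because $p=1\neq0$), which reads $\sum_{l=-s}^{s}\beta_l^{1,q}=0$. Hence each operator $\Delta_h^{1,\lceil(R-k)/2\rceil}$ telescopes, producing numerical fluxes $\hat g_{i+\mig}^{(k),n}$, built linearly from $\widetilde f_{i+\cdot,n}^{(k)}$, with $\widetilde u_{i,n}^{(k+1)}=-\frac{1}{h}\big(\hat g_{i+\mig}^{(k),n}-\hat g_{i-\mig}^{(k),n}\big)$.

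Substituting these representations into \eqref{eq:60} then gives
\[
u_i^{n+1}=u_i^n-\frac{\Delta t}{h}\Big[\big(\hat f_{i+\mig}^n-\hat f_{i-\mig}^n\big)+\sum_{k=1}^{R-1}\frac{\Delta t^{k}}{(k+1)!}\big(\hat g_{i+\mig}^{(k),n}-\hat g_{i-\mig}^{(k),n}\big)\Big],
\]
and defining the global numerical flux
\[
\hat g_{i+\mig}^n=\hat f_{i+\mig}^n+\sum_{k=1}^{R-1}\frac{\Delta t^{k}}{(k+1)!}\,\hat g_{i+\mig}^{(k),n}
\]
yields exactly \eqref{eq:12}. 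The only genuinely nontrivial step is the telescoping lemma together with the verification of the consistency condition; everything else is bookkeeping. I expect the main subtlety to be purely notational, namely keeping straight that $\Delta_h^{1,\lceil(R-k)/2\rceil}$ acts on the spatial index of the well-defined grid function $j\mapsto\widetilde f_{j,n}^{(k)}$, so that the telescoping is performed in $i$ while the internal dependence of each $\widetilde f_{j,n}^{(k)}$ on the data is irrelevant to the flux-difference structure.
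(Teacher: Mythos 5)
Your proposal is correct, and it follows the same skeleton as the paper's proof: split the update \eqref{eq:60} term by term, rewrite each centered-difference operator $\Delta_h^{1,q}$ as a difference of interface values using the consistency condition $\sum_{l}\beta_l^{1,q}=0$ (which, exactly as you argue, comes from the $r=0$ equation of the Vandermonde system \eqref{eq:9}, available because $p=1\neq 0$), and then absorb all contributions into one global numerical flux --- your final expression coincides with the paper's flux \eqref{eq:17}. The one genuine difference is how the telescoping coefficients are obtained. The paper postulates the conservative rewriting \eqref{eq:16} with unknown coefficients $\gamma_l^{q}$, arrives at the overdetermined system \eqref{eq:14} of $2q+1$ equations in $2q$ unknowns, and establishes solvability by an elimination/determinant computation on the augmented matrix combined with $\sum_l\beta_l^{1,q}=0$. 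You instead exhibit the solution in closed form as partial sums, $b_m=\sum_{k=m}^{s}a_k$, and verify directly that $b_m-b_{m+1}=a_m$ and $b_{-s}=0$; in the paper's notation this amounts to the explicit formula $\gamma_l^{q}=\sum_{k=l+1}^{q}\beta_k^{1,q}$. Your route is more constructive and slightly more elementary: it replaces an existence-and-uniqueness argument for a linear system with a formula one can implement and check in one line, whereas the paper's formulation makes the uniqueness of the conservative rewriting explicit as a by-product. Both arguments are complete and yield the same conservation form \eqref{eq:12}.
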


\begin{proof}
The key to \eqref{eq:12} is to express $\Delta_h^{1,q}v$ in
\eqref{eq:13} in a conservative way:
\begin{equation}\label{eq:16}
  \begin{aligned}
    \Delta_h^{1,q}v&:=\frac{1}{h}\sum_{l=-q}^{q}\beta_{l}^{1,q} v_{l} = 
    \frac{1}{h}
    \left(
      \sum_{l=-q}^{q-1}\gamma_{l}^{q} v_{l+1}
      -\sum_{l=-q}^{q-1}\gamma_{l}^{q} v_{l}
    \right),\\
    &=\frac{1}{h}
    \left(
      \gamma_{q-1}^{q}v_{q}+\sum_{l=-q+1}^{q-1}(\gamma_{l-1}^{q}
      -\gamma_{l}^{q} )v_{l}
      -\gamma_{-q}v_{-q}^{q}
    \right),
  \end{aligned}
\end{equation}
with $\gamma_{l}^{q}$ to be determined. Since the latter ought to be
satisfied by any $v$, we deduce that
\begin{align*}
  \gamma_{q-1}^{q}&=\beta_q^{1,q}\\
  \gamma_{l-1}^{q}
  -\gamma_{l}^{q} &=\beta_{l}^{1,q},\quad l=-q+1, \dots q-1.\\
  -\gamma_{-q}^{q}&=\beta_{-q}^{1,q}
\end{align*}
This is a system of $2q+1$ equations with $2q$ unknowns:
\begin{equation}\label{eq:14}
  \begin{bmatrix}
    -1&0&0&\dots&\dots&0\\
    1&-1&0&\dots&\dots&0\\
    0&1&-1&\dots&\dots&0\\
    \vdots&&\ddots&\ddots&&\vdots\\
    \vdots&&&\ddots&\ddots&\vdots\\
    0&\dots&\dots&0&1&-1\\
    0&\dots&&\dots&0&1\\
  \end{bmatrix}
  \begin{bmatrix}
    \gamma^{q}_{-q}\\
    \gamma^{q}_{-q+1}\\
    \vdots\\
    \vdots\\
    \gamma^{q}_{q-2}\\
    \gamma^{q}_{q-1}
  \end{bmatrix}    
  =
  \begin{bmatrix}
    \beta^{1,q}_{-q}\\
    \beta^{1,q}_{-q+1}\\
    \vdots\\
    \vdots\\
    \vdots\\
    \beta^{1,q}_{q-1}\\
    \beta^{1,q}_{q}       
  \end{bmatrix}    
\end{equation}
The subsystem formed by the first $2q$ equations has a lower
triangular invertible matrix, hence the first $2q$ equations can be
uniquely solved. Elimination of the elements in the subdiagonal from
those in the diagonal yields  the determinant of the matrix:
\begin{equation*}
  \det\begin{bmatrix}
    -1&0&0&\dots&\dots&0&            \beta^{1,q}_{-q}\\   
    1&-1&0&\dots&\dots&0&            \beta^{1,q}_{-q+1}\\ 
    0&1&-1&\dots&\dots&0&            \vdots\\             
    \vdots&&\ddots&\ddots&&\vdots&   \vdots\\             
    \vdots&&&\ddots&\ddots&\vdots&   \vdots\\             
    0&\dots&\dots&0&1&-1&            \beta^{1,q}_{q-1}\\  
    0&\dots&&\dots&0&1&              \beta^{1,q}_{q}      
  \end{bmatrix}
  =(-1)^{2q}\sum_{l=-q}^{q}\beta^{1,q}_{l}
\end{equation*}
By \eqref{eq:9} with $r=0$, $\sum_{l=-q}^{q}\beta^{1,q}_{l}=0$,
therefore  system \eqref{eq:14} has a unique solution.

With the notation $\mu=\left\lceil\frac{R-l}{2}\right\rceil$, 
from \eqref{eq:15}, \eqref{eq:60} and \eqref{eq:16} we deduce:
\begin{align*}
  u_{i}^{n+1}&=u_{i}^{n}-\frac{\Delta t}{h}(\hat f_{i+\mig}^n-\hat f_{i-\mig}^n)-\sum_{l=1}^{R-1}\frac{ (\Delta t)^{l+1}}{(l+1)!}
  \Delta_h^{1,\mu}
  \widetilde f^{(l)}_{i+\cdot,n}\\
  &=u_{i}^{n}-\frac{\Delta t}{h}(\hat f_{i+\mig}^n-\hat f_{i-\mig}^n)\\
  &-\sum_{l=1}^{R-1}\frac{ (\Delta t)^{l+1}}{(l+1)!}
  \frac{1}{h}\big(\sum_{s=-\qRl}^{\qRl-1}\gamma_{s}^{\qRl}
  \widetilde f^{(l)}_{i+s+1,n}-\sum_{s=-\qRl}^{\qRl-1}\gamma_{s}^{\qRl}
  \widetilde f^{(l)}_{i+s,n}\big)\\
  &=u_{i}^{n}-\frac{\Delta t}{h}(\hat f_{i+\mig}^n-\hat f_{i-\mig}^n)\\
  &-\frac{\Delta t}{h} \sum_{l=1}^{R-1}\frac{ (\Delta t)^{l}}{(l+1)!}
  \big(\sum_{s=-\qRl}^{\qRl-1}\gamma_{s}^{\qRl}
  \widetilde f^{(l)}_{i+s+1,n}-\sum_{s=-\qRl}^{\qRl-1}\gamma_{s}^{\qRl}
  \widetilde f^{(l)}_{i+s,n}\big)
\end{align*}
and we deduce equation \eqref{eq:12} with
\begin{equation}\label{eq:17}
  \hat g_{i+\mig}^{n}=\hat f_{i+\mig}^n+\sum_{l=1}^{R-1}\frac{
    (\Delta t)^{l}}{(l+1)!} \sum_{s=-\qRl}^{\qRl-1}\gamma_{s}^{\qRl}
  \widetilde f^{(l)}_{i+s+1,n}.
\end{equation}
\qed
\end{proof}

\begin{remark}
  From \eqref{eq:15} and \eqref{eq:3bis} we may deduce that for each
  $i=0,\dots,M-1$, the
  computation   of the coefficients $\widetilde u^{(l)}_{i,n}$, for
  $l=2,\dots,k$, requires $\frac{R^2} {2}+\bigO(R)$ flux evaluations,
  and $\frac{3R^2}{2}+\bigO(R)$ floating point operations. Therefore, the time
  step can be  performed with one extra WENO reconstruction
  for $\widetilde u^{(1)}_{i,n}$ and  about $2R$ more floating point
  operations to evaluate the polynomial in \eqref{eq:60}. 
\end{remark}  

\section{Numerical experiments}\label{nex}

In this section we present some 1D and 2D experiments both for scalar
and system of equations involving comparisons of the fifth order both in
space ($r=3$) and time ($R=2r-1=5$) exact and approximate Lax-Wendroff-type
schemes, together with the results obtained using the third order TVD
Runge-Kutta time discretization. 

For now on we will refer as WENO-LW to the exact Lax-Wendroff-type
procedure, WENO-LWA to the approximate Lax-Wendroff-type procedure and
WENO-RK when a Runge-Kutta discretization is used. In each case,
numbers indicate the spatial accuracy order (first) and
the time accuracy order (second).

\subsection{1D Linear advection equation}
We set as initial condition $u(x,0)=0.25+0.5\sin(\pi x)$, periodic
boundary conditions at both sides, whose exact solution is $u(x,
t)=0.25+0.5\sin(\pi(x-t))$, using both the exact and approximate
Lax-Wendroff-type procedure with fifth order accuracy both in space and
time (WENO5-LW5 and WENO5-LWA5, respectively) and run the
simulation up to $t=1$, with CFL=0.5 (except for RK3, where we set
$\Delta t=h^{\frac{5}{3}}$ in order to achieve fifth order accuracy in time),
and for resolutions $n=20\cdot 2^n$ points, $1\leq n\leq 5$, obtaining
the results shown in Tables \ref{SLA-RK}-\ref{SLA-CKR}.
\begin{table}[htb]
  \centering
  \begin{tabular}{|c|c|c|c|c|}
    \hline
    $n$ & Error $\|\cdot\|_1$ & Order $\|\cdot\|_1$ & Error
    $\|\cdot\|_{\infty}$ & Order $\|\cdot\|_{\infty}$ \\
    \hline
    40 & 1.13E$-5$ & $-$ & 2.39E$-5$ & $-$  \\
    \hline
    80 & 3.49E$-7$ & 5.02 & 7.17E$-7$ & 5.06 \\
    \hline
    160 & 1.09E$-8$ & 5.00 & 2.25E$-8$ & 4.99 \\
    \hline
    320 & 3.41E$-10$ & 5.00 & 6.77E$-10$ & 5.06 \\
    \hline
    640 & 1.15E$-11$ & 4.89 & 2.23E$-11$ & 4.93 \\
    \hline
    1280 & 3.51E$-12$ & 1.71 & 8.32E$-12$ & 1.42 \\
    \hline
  \end{tabular}
  \caption{Error table for linear advection equation, $t=1$. WENO5-RK3.}
  \label{SLA-RK}
\end{table}
\begin{table}[htb]
  \centering
  \begin{tabular}{|c|c|c|c|c|}
    \hline
    $n$ & Error $\|\cdot\|_1$ & Order $\|\cdot\|_1$ & Error
    $\|\cdot\|_{\infty}$ & Order $\|\cdot\|_{\infty}$ \\
    \hline
    40 & 1.09E$-5$ & $-$ & 2.37E$-5$ & $-$  \\
    \hline
    80 & 3.29E$-7$ & 5.05 & 7.00E$-7$ & 5.08 \\
    \hline
    160 & 1.02E$-8$ & 5.01 & 2.21E$-8$ & 4.98 \\
    \hline
    320 & 3.19E$-10$ & 5.00 & 6.65E$-10$ & 5.06 \\
    \hline
    640 & 9.96E$-12$ & 5.00 & 2.02E$-11$ & 5.04 \\
    \hline
    1280 & 3.12E$-13$ & 4.99 & 6.12E$-13$ & 5.04 \\
    \hline
  \end{tabular}
  \caption{Error table for linear advection equation, $t=1$. WENO5-LW5.}
  \label{SLA-CK}
\end{table}
\begin{table}[htb]
  \centering
  \begin{tabular}{|c|c|c|c|c|}
    \hline
    $n$ & Error $\|\cdot\|_1$ & Order $\|\cdot\|_1$ & Error
    $\|\cdot\|_{\infty}$ & Order $\|\cdot\|_{\infty}$ \\
    \hline
    40 & 1.09E$-5$ & $-$ & 2.37E$-5$ & $-$  \\
    \hline
    80 & 3.29E$-7$ & 5.05 & 7.00E$-7$ & 5.08 \\
    \hline
    160 & 1.02E$-8$ & 5.01 & 2.21E$-8$ & 4.98 \\
    \hline
    320 & 3.19E$-10$ & 5.00 & 6.65E$-10$ & 5.06 \\
    \hline
    640 & 9.96E$-12$ & 5.00 & 2.02E$-11$ & 5.04 \\
    \hline
    1280 & 3.12E$-13$ & 5.00 & 6.12E$-13$ & 5.04 \\
    \hline
  \end{tabular}
  \caption{Error table for linear advection equation, $t=1$. WENO5-LWA5.}
  \label{SLA-CKR}
\end{table}

From the results, we can conclude that all the proposed scheme
achieve the fifth order accuracy. We must remark that the loss of
accuracy appreciable in the last row of the RK3 version with $\Delta
t=h^{\frac{5}{3}}$ is due to accumulation of machine errors because of
a major number of required iterations (produced by the time-space
re-scaling performed to achieve the fifth order accuracy). On the
other hand, the results obtained through the approximated scheme,
WENO5-LWA5, are almost identical than those obtained through the
original version, WENO5-LW5, as should be expected, since in this case
(linear flux) both the exact and the approximate formulation yield
theoretically the same results.

\subsection{1D Burgers equation}
We now work with the inviscid Burgers equation, which is given by
$$u_t+f(u)_x=0,\quad f(u)=\frac{1}{2}u^2,\quad x\in(-1, 1).$$
\subsubsection{Smooth solution}
We perform an accuracy test in this equation with the same setup
(initial and boundary conditions as well as the spatial resolutions)
as in the previous example, except that now we set the end time to
$t=0.3$ with CFL=0.5. The results for
WENO5-LW5 and WENO5-LWA5 are presented in Tables \ref{SB-CK}-\ref{SB-CKR}.
\begin{table}[htb]
  \centering
  \begin{tabular}{|c|c|c|c|c|}
    \hline
    $n$ & Error $\|\cdot\|_1$ & Order $\|\cdot\|_1$ & Error
    $\|\cdot\|_{\infty}$ & Order $\|\cdot\|_{\infty}$ \\
    \hline
    40 & 2.38E$-5$ & $-$ & 2.09E$-4$ & $-$  \\
    \hline
    80 & 7.93E$-7$ & 4.91 & 9.44E$-6$ & 4.47 \\
    \hline
    160 & 2.45E$-8$ & 5.01 & 3.01E$-7$ & 4.97 \\
    \hline
    320 & 7.48E$-10$ & 5.04 & 9.13E$-9$ & 5.05 \\
    \hline
    640 & 2.32E$-11$ & 5.01 & 2.81E$-10$ & 5.02 \\
    \hline
    1280 & 7.22E$-13$ & 5.00 & 8.69E$-12$ & 5.01  \\
    \hline
  \end{tabular}
  \caption{Error table for Burgers equation, $t=0.3$. WENO5-LW5.}
  \label{SB-CK}
\end{table}

\begin{table}[htb]
  \centering
  \begin{tabular}{|c|c|c|c|c|}
    \hline
    $n$ & Error $\|\cdot\|_1$ & Order $\|\cdot\|_1$ & Error
    $\|\cdot\|_{\infty}$ & Order $\|\cdot\|_{\infty}$ \\
    \hline
    40 & 2.38E$-5$ & $-$ & 2.09E$-4$ & $-$  \\
    \hline
    80 & 7.94E$-7$ & 4.91 & 9.46E$-6$ & 4.47 \\
    \hline
    160 & 2.46E$-8$ & 5.01 & 3.02E$-7$ & 4.97 \\
    \hline
    320 & 7.50E$-10$ & 5.04 & 9.15E$-9$ & 5.05 \\
    \hline
    640 & 2.32E$-11$ & 5.01 & 2.81E$-10$ & 5.02 \\
    \hline
    1280 & 7.23E$-13$ & 5.00 & 8.71E$-12$ & 5.01  \\
    \hline
  \end{tabular}
  \caption{Error table for Burgers equation, $t=0.3$. WENO5-LW5.}
  \label{SB-CKR}
\end{table}

In this case, we can see again that the fifth order accuracy is
achieved and the errors both in $\|\cdot\|_1$ and $\|\cdot\|_{\infty}$
of the exact and approximate version are very close.

\subsubsection{Discontinuous solution}
If we now change the final time to $t=12$, the wave breaks at
$t=1.1$ and a shock is then formed. We compare the WENO5-LW5 and
WENO5-LWA5 techniques with WENO5-RK3, whose results are shown in Figure
\ref{db}. We run this simulation using a resolution of $n=80$ points.
\begin{figure}[htb]
\centering
\begin{tabular}{cc}
\multicolumn{2}{c}{\includegraphics[width=0.7\textwidth]{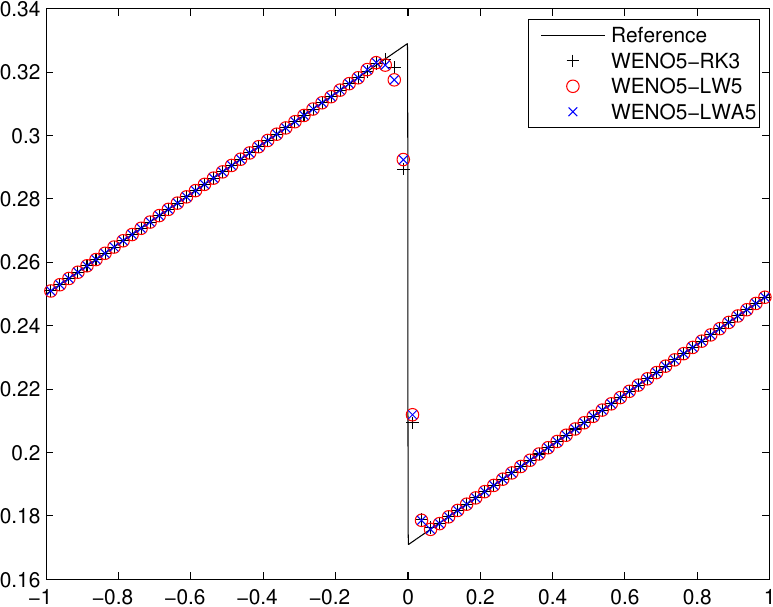}}\\
\multicolumn{2}{c}{(a) Global results.}\\[1ex]
\includegraphics[width=0.45\textwidth]{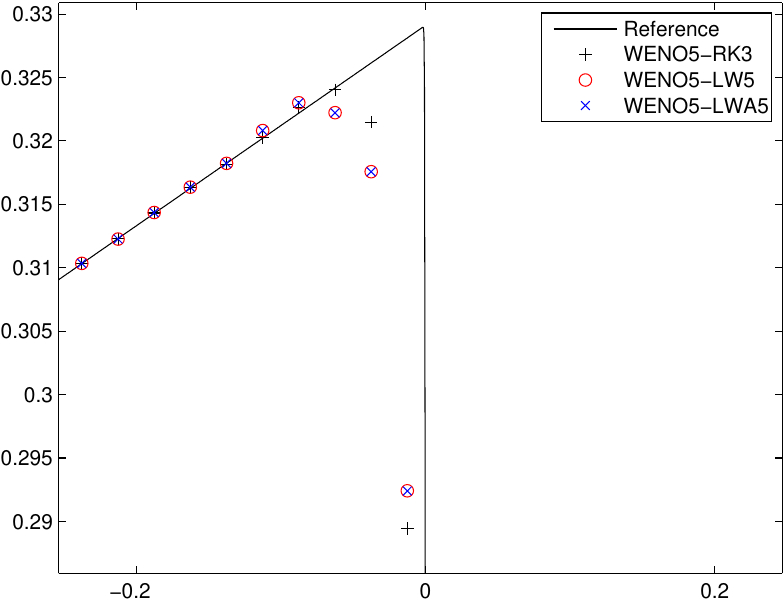} &
\includegraphics[width=0.45\textwidth]{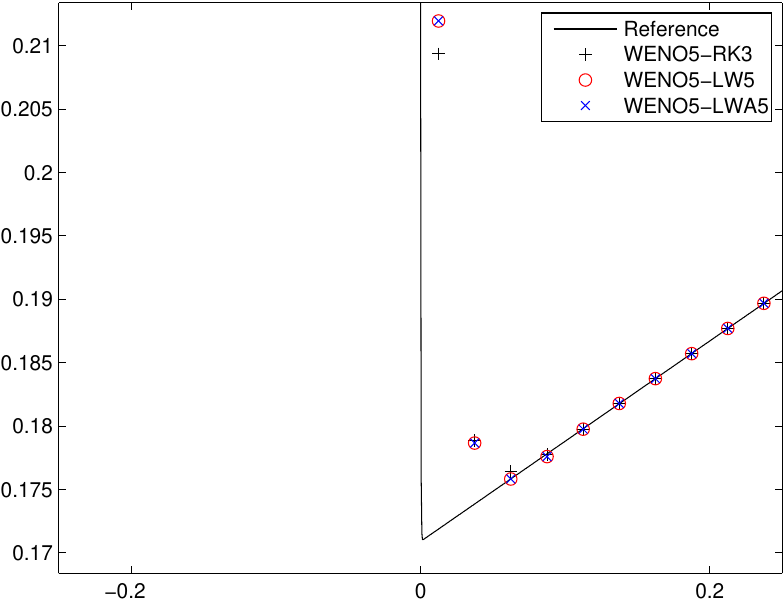} \\
(b) Enlarged view. & (c) Enlarged view.
\end{tabular}
\caption{Discontinuous solution for Burgers equation,
  $t=12$.}
\label{db}
\end{figure}

One can conclude from the results shown in Figure \ref{db} that even
in the discontinuous case the approximate formulation results are
quite close to those obtained through the exact version.

\subsection{1D Euler equations}
The 1D Euler equations for gas dynamics are:

\begin{equation}\label{eq:eulereq1}
\begin{aligned}
&u_t+f(u)_x=0,\quad u=u(x,t),\quad\Omega=(0,1),\\
&u=\left[\begin{array}{c}
    \rho \\
    \rho v \\
    E \\
  \end{array}\right],\hspace{0.3cm}f(u)=\left[\begin{array}{c}
    \rho v \\
    p+\rho v^2 \\
    v(E+p) \\
  \end{array}\right],
\end{aligned}
\end{equation}
where $\rho$ is the density, $v$ is the
velocity and  $E$ is the specific energy of the system. The variable
$p$ stands for the pressure and is given by the equation of state:
$$p=\left(\gamma-1\right)\left(E-\frac{1}{2}\rho v^2\right),$$
where $\gamma$ is the adiabatic constant, that will be taken as
  $1.4$.
\subsubsection{Smooth solution}
We set as initial conditions
$$\left\{\begin{array}{rcl}
    \rho(x, t)&=&0.75+0.5\sin(\pi x) \\
    \rho v(x, t)&=&0.25+0.5\sin(\pi x) \\
    E(x, t)&=&0.75+0.5\sin(\pi x)
    \end{array}\right. ,\quad x\in(-1,1),$$
and periodic boundary conditions for all the quantities. For $t=0.1$
the characteristic lines do not cross so that
the solution remains smooth. We compute a reference solution at that
time with a very fine mesh and perform an order test with WENO5-LW5
and WENO5-LWA5 using
CFL=0.5 for the same spatial resolutions
as the previous examples.

Both the errors and order quantities presented in the tables are an
average of each individual quantity obtained for the three
quantities of $u$. The obtained results are presented in Tables
\ref{SE-CK}-\ref{SE-CKR}.
\begin{table}[htb]
  \centering
  \begin{tabular}{|c|c|c|c|c|}
    \hline
    $n$ & Error $\|\cdot\|_1$ & Order $\|\cdot\|_1$ & Error
    $\|\cdot\|_{\infty}$ & Order $\|\cdot\|_{\infty}$ \\
    \hline
    40 & 2.98E$-4$ & $-$ & 4.70E$-3$ & $-$  \\
    \hline
    80 & 3.36E$-5$ & 3.15 & 5.49E$-4$ & 3.10 \\
    \hline
    160 & 1.60E$-6$ & 4.39 & 4.59E$-5$ & 3.58 \\
    \hline
    320 & 5.53E$-8$ & 4.85 & 1.78E$-6$ & 4.69 \\
    \hline
    640 & 1.76E$-9$ & 4.98 & 6.01E$-8$ & 4.89 \\
    \hline
    1280 & 5.65E$-11$ & 4.96 & 1.84E$-9$ & 5.03 \\
    \hline
  \end{tabular}
  \caption{Error table for 1D Euler equation, $t=0.1$. WENO5-LW5.}
  \label{SE-CK}
\end{table}
\begin{table}[htb]
  \centering
  \begin{tabular}{|c|c|c|c|c|}
    \hline
    $n$ & Error $\|\cdot\|_1$ & Order $\|\cdot\|_1$ & Error
    $\|\cdot\|_{\infty}$ & Order $\|\cdot\|_{\infty}$ \\
    \hline
    40 & 2.98E$-4$ & $-$ & 4.70E$-3$ & $-$  \\
    \hline
    80 & 3.36E$-5$ & 3.15 & 5.49E$-4$ & 3.10 \\
    \hline
    160 & 1.60E$-6$ & 4.39 & 4.59E$-5$ & 3.58 \\
    \hline
    320 & 5.53E$-8$ & 4.85 & 1.78E$-6$ & 4.69 \\
    \hline
    640 & 1.76E$-9$ & 4.98 & 6.01E$-8$ & 4.89 \\
    \hline
    1280 & 5.65E$-11$ & 4.96 & 1.84E$-9$ & 5.03 \\
    \hline
  \end{tabular}
  \caption{Error table for 1D Euler equation, $t=0.1$. WENO5-LWA5.}
  \label{SE-CKR}
\end{table}

In this case it can be seen that the results shown in the tables are
identical at the accuracy in which the errors have been
displayed. This again indicates that the approximate version provides
essentially the same results than the exact version. For instance, the
global average error ($\|\cdot\|_1$) including all three components
and cells of the numerical solution at $t=0.1$ at the resolution
$n=1280$ of the approximate flux approach with respect to the exact
flux approach is 2.82E$-15$.

\subsubsection{Shu-Osher problem}
We now consider the interaction with a Mach 3 shock and a sine wave. The
spatial domain is now given by $\Omega:=(-5,5)$, with initial
conditions
$$\begin{cases}
  \left.\begin{array}{rcl}
    \rho(x, t)&=&3.857143 \\
    v(x, t)&=&2.629369 \\
    p(x, t)&=&10.33333
    \end{array}\right\} & \mbox{if } x\leq-4 \\
  \left.\begin{array}{rcl}
    \rho(x, t)&=&1.0+0.2\sin(5x) \\
    v(x, t)&=&0 \\
    p(x, t)&=&1
    \end{array}\right\} & \mbox{if } x>-4 \\
  \end{cases}$$
with left inflow and right outflow boundary conditions.

We run one simulation until $t=1.8$ and compare the results obtained
with WENO5-RK3, WENO5-LW5 and WENO5-LWA5, $n=400$ cells,
$\textnormal{CFL}=0.5$ with a reference solution computed with 16000
grid points. The results are shown in Figure \ref{so}.
\begin{figure}[htb]
    \centering
  \begin{tabular}{cc}
    \includegraphics[width=0.45\textwidth]{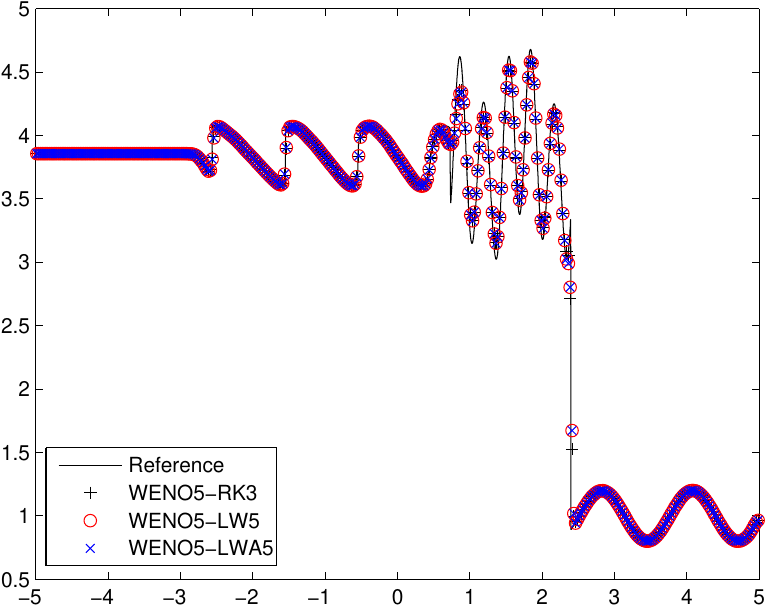}
    & \includegraphics[width=0.45\textwidth]{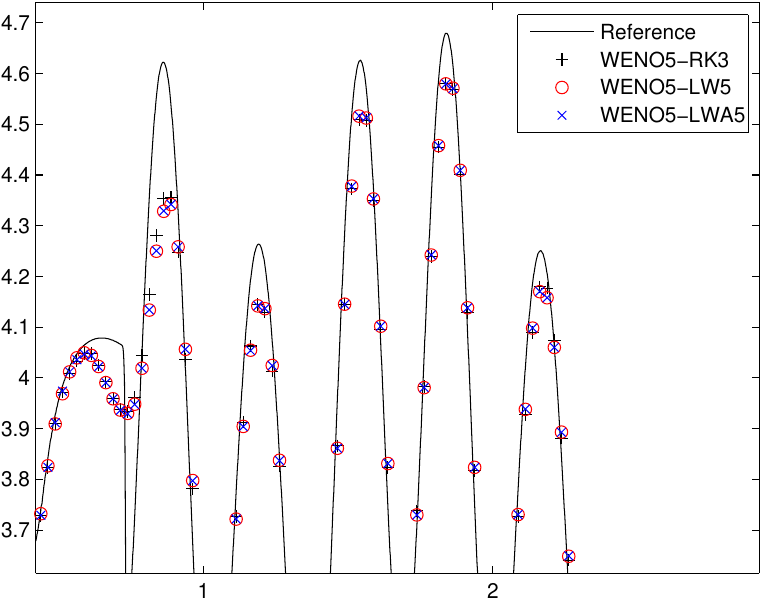} \\
    (a) Global results & (b) Enlarged view \\
    \includegraphics[width=0.45\textwidth]{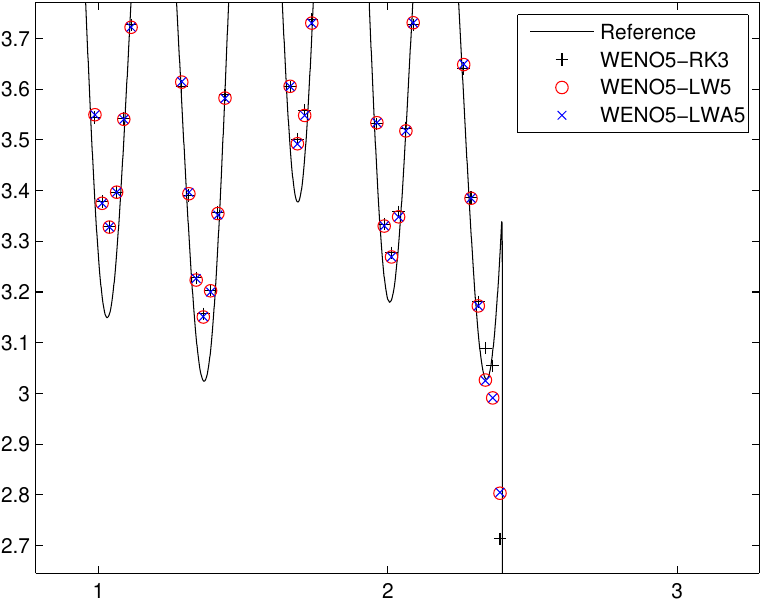}
    & \includegraphics[width=0.45\textwidth]{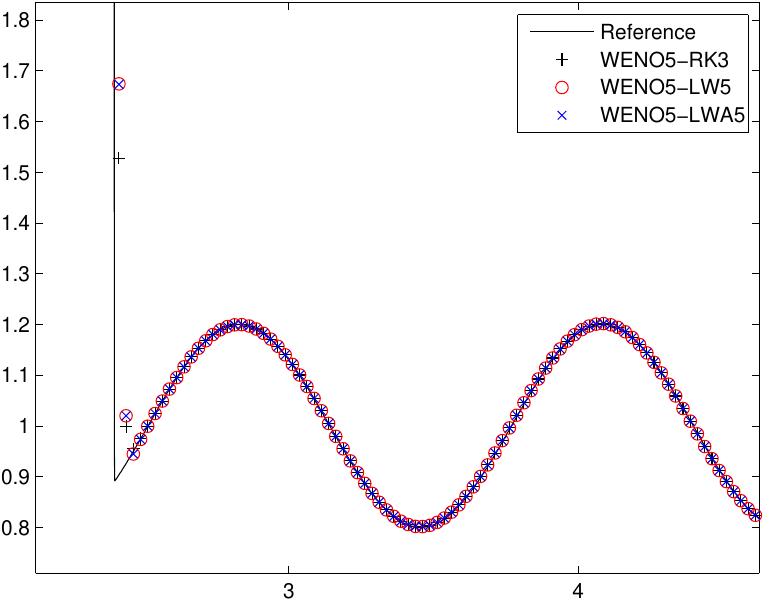} \\
    (c) Enlarged view & (d) Enlarged view \\
  \end{tabular}
  \caption{Shu-Osher problem. Pressure field.}
  \label{so}
\end{figure}

From the results it can be concluded that again the behavior of both
Lax-Wendroff-type techniques is almost the same.

\subsubsection{Blast wave}
Now the initial data is the following one, corresponding to the 
interaction of two blast waves:
$$u(x,0)=\left\{\begin{array}{ll}
    u_L & 0<x<0.1,\\
    u_M & 0.1<x<0.9,\\
    u_R & 0.9<x<1,
    \end{array}\right.$$
where $\rho_L=\rho_M=\rho_R=1$, $v_L=v_M=v_R=0$,
$p_L=10^3,p_M=10^{-2},p_R=10^2$. Reflecting boundary conditions are set
at $x=0$ and $x=1$, simulating a solid wall at both sides. This
problem involves multiple reflections of shocks and rarefactions off
the walls and many interactions of waves inside the domain. We
use here the same node setup as in the previous tests.

We compute a reference solution, this
time using a resolution of $n=16000$ points and compare the
performance of the results setting $n=800$ with the
WENO5-RK3, WENO5-LW5 and WENO5-LWA5 schemes with CFL=0.5. The
results of the density field are shown in Figure \ref{bw}, where the
conclusions are the same than those obtained in the
previous experiments.
\begin{figure}[htb]
  \centering
  \begin{tabular}{cc}
    \includegraphics[width=0.45\textwidth]{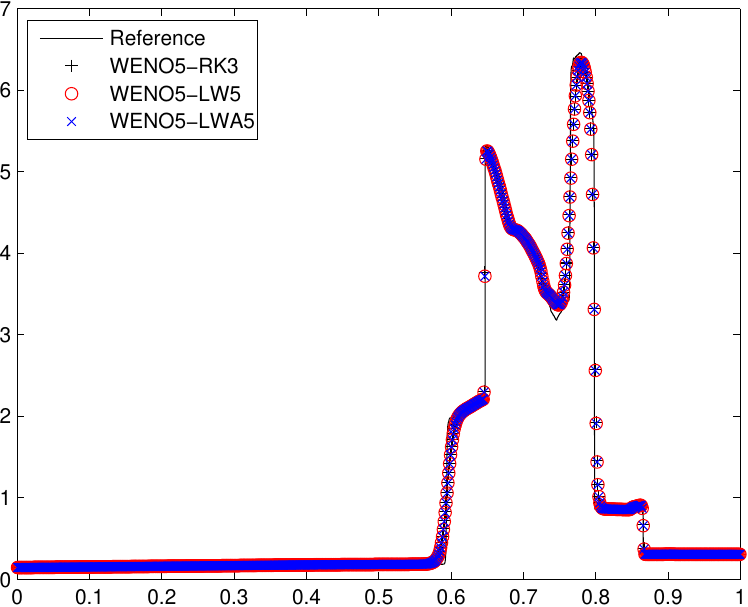}
    & \includegraphics[width=0.45\textwidth]{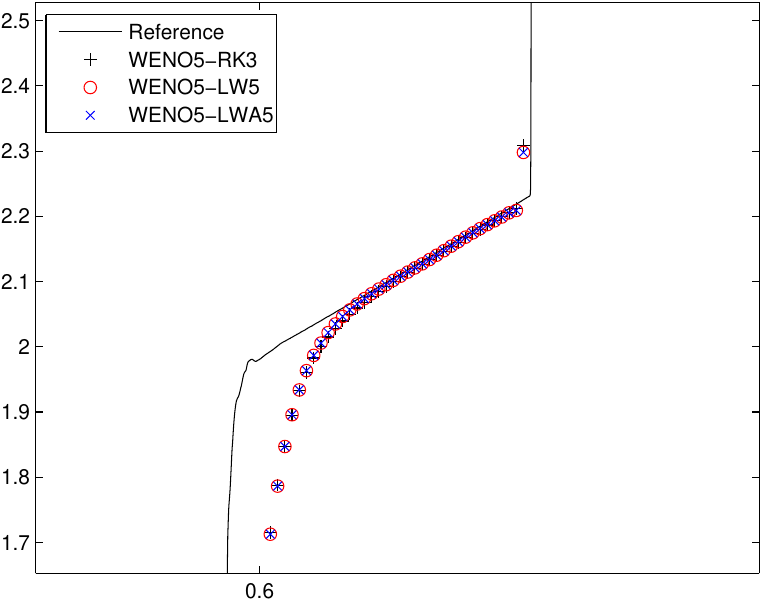} \\
    (a) Density field & (b) Density field (zoom) \\
    \includegraphics[width=0.45\textwidth]{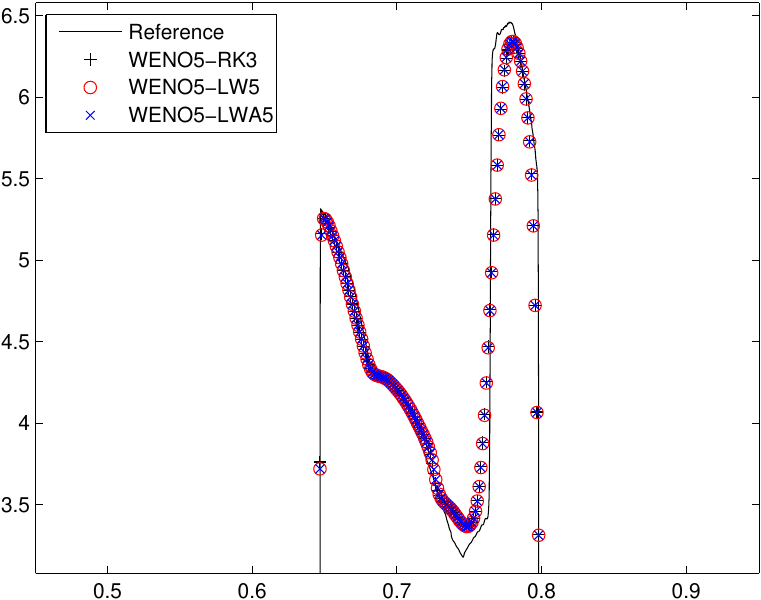}
    & \includegraphics[width=0.45\textwidth]{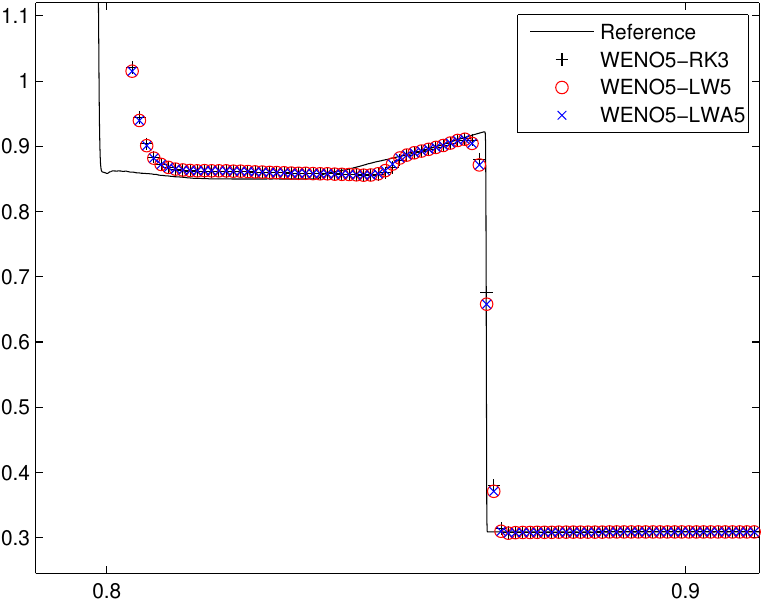} \\
    (c) Density field (zoom) & (d) Density field (zoom) \\
  \end{tabular}
  \caption{Blast wave.}
  \label{bw}
\end{figure}

\subsection{2D Euler equations}
The equations that will be considered in this section are the
two-dimensional Euler equations for inviscid gas dynamics
\begin{equation}\label{eq:eulereq}
\begin{aligned}
&u_t+f(u)_x+g(u)_y=0,\quad u=u(x,y,t),\\
&u=\left[\begin{array}{c}
    \rho \\
    \rho v^x \\
    \rho v^y \\
    E \\
  \end{array}\right],\hspace{0.3cm}f(u)=\left[\begin{array}{c}
    \rho v^x \\
    p+\rho (v^x)^2 \\
    \rho v^xv^y \\
    v^x(E+p) \\
  \end{array}\right],\hspace{0.3cm}g(u)=\left[\begin{array}{c}
    \rho v^y \\
    \rho v^xv^y \\
    p+\rho (v^y)^2 \\
    v^y(E+p) \\
  \end{array}\right].
\end{aligned}
\end{equation}
In these equations,  $\rho$ is the density, $(v^x, v^y)$  is the
velocity and  $E$ is the specific energy of the system. The variable
$p$  stands for the pressure and is given by the equation of state:
$$p=(\gamma-1)\left(E-\frac{1}{2}\rho((v^x)^2+(v^y)^2)\right),$$
where $\gamma$ is the adiabatic constant, that will be taken as
$1.4$ in all the experiments as in the 1D case.
\subsubsection{Smooth solution}
In order to test the accuracy of our scheme
in the general scenario of a multidimensional system of conservation
laws, we perform a test using the 2D Euler equations with smooth
initial conditions, given by
\begin{equation*}
  \begin{split}
    u_0(x,y)&=(\rho(x,y),v^x(x,y),v^y(x,y),E(x,y))\\
    &=\left(\frac{3}{4}+\frac{1}{2}\cos(\pi(x+y)),
    \frac{1}{4}+\frac{1}{2}\cos(\pi(x+y)),\right.\\
    &\left.\frac{1}{4}+\frac{1}{2}\sin(\pi(x+y)),
    \frac{3}{4}+\frac{1}{2}\sin(\pi(x+y))\right),
  \end{split}
\end{equation*}
where $x\in\Omega=[-1,1]\times[-1,1]$, with periodic boundary
conditions.

In order to perform the smoothness analysis, we compute a reference
solution in a fine mesh and then compute numerical solutions for the
resolutions $n\times n$, for $n=10\cdot 2^k,$
$1\leq k\leq 5$, obtaining the results shown in Tables
\ref{SE2-CK}-\ref{SE2-CKR} at the time $t=0.025$ for CFL=0.5.
\begin{table}[htb]
  \centering
  \begin{tabular}{|c|c|c|c|c|}
    \hline
    $n$ & Error $\|\cdot\|_1$ & Order $\|\cdot\|_1$ & Error
    $\|\cdot\|_{\infty}$ & Order $\|\cdot\|_{\infty}$ \\
    \hline
    40 & 1.80E$-5$ & $-$ & 2.74E$-4$ & $-$ \\
    \hline
    80 & 1.09E$-6$ & 4.05 & 1.80E$-5$ & 3.93 \\
    \hline
    160 & 3.89E$-8$ & 4.80 & 7.36E$-7$ & 4.61 \\
    \hline
    320 & 1.29E$-9$ & 4.92 & 2.49E$-8$ & 4.88 \\
    \hline
    640 & 4.11E$-11$ & 4.97 & 8.07E$-10$ & 4.95 \\
    \hline
    1280 & 1.23E$-12$ & 5.06 & 2.43E$-11$ & 5.06  \\
    \hline
  \end{tabular}
  \caption{Error table for 2D Euler equation, $t=0.025$. WENO5-LW5.}
  \label{SE2-CK}
\end{table}
\begin{table}[htb]
  \centering
  \begin{tabular}{|c|c|c|c|c|}
    \hline
    $n$ & Error $\|\cdot\|_1$ & Order $\|\cdot\|_1$ & Error
    $\|\cdot\|_{\infty}$ & Order $\|\cdot\|_{\infty}$ \\
    \hline
    40 & 1.80E$-5$ & $-$ & 2.74E$-4$ & $-$ \\
    \hline
    80 & 1.09E$-6$ & 4.05 & 1.80E$-5$ & 3.93 \\
    \hline
    160 & 3.89E$-8$ & 4.80 & 7.36E$-7$ & 4.61 \\
    \hline
    320 & 1.29E$-9$ & 4.92 & 2.49E$-8$ & 4.88 \\
    \hline
    640 & 4.11E$-11$ & 4.97 & 8.07E$-10$ & 4.95 \\
    \hline
    1280 & 1.23E$-12$ & 5.06 & 2.43E$-11$ & 5.06  \\
    \hline
  \end{tabular}
  \caption{Error table for 2D Euler equation, $t=0.025$. WENO5-LWA5.}
  \label{SE2-CKR}
\end{table}

We can see thus that our scheme achieves the desired accuracy even in
the general scenario of a multidimensional system of conservation
laws, which is consistent with our theoretical results. Also, we can
see that again the results obtained through the approximate
Lax-Wendroff-type procedure are almost the same than those obtained using
the exact version.

\subsubsection{Double Mach Reflection}
This experiment uses the Euler equations to model a vertical right-going Mach
10 shock colliding with an equilateral triangle. By symmetry, this is
equivalent to a collision with a ramp with a slope of 30 degrees with
respect to the horizontal line.

For the sake of simplicity, we consider the equivalent problem in a
rectangle, consisting on a rotated shock, whose vertical angle is
$\frac{\pi}{6}$ rad. The domain we consider in this
problem is the rectangle $\Omega=[0,4]\times[0,1]$, whose initial conditions
are
$$u_0(x,y)=\begin{cases}
  C_1 & y\leq\frac{1}{4}+\tan(\frac{\pi}{6})x,\\
  C_2 & y>\frac{1}{4}+\tan(\frac{\pi}{6})x,\\
\end{cases}$$
where
\begin{equation*}
  \begin{split}
    C_1=(\rho_1,v^x_1,v^y_1,E_1)^T
    &=(8,8.25\cos(\frac{\pi}{6}),-8.25\sin(\frac{\pi}{6}),563.5)^T,\\
    C_2=(\rho_2,v^x_2,v^y_2,E_2)^T
    &=(1.4,0,0,2.5)^T.\\
  \end{split}
\end{equation*}
We impose inflow boundary conditions, with value $C_1$, at the left
side, $\{0\}\times[0,1]$, outflow boundary conditions both at
$[0,\frac{1}{4}]\times\{0\}$ and $\{4\}\times[0,1]$, reflecting
boundary conditions at  $]\frac{1}{4},4]\times\{0\}$ and inflow
boundary conditions at the upper side, $[0,4]\times\{1\}$, which
mimics the shock at its actual traveling speed:
$$u(x,1,t)=\begin{cases}
  C_1 & x\leq\frac{1}{4}+\frac{1+20t}{\sqrt{3}}, \\
  C_2 & x>\frac{1}{4}+\frac{1+20t}{\sqrt{3}}.\\
\end{cases}$$
We run different simulations until $t=0.2$ at a resolution of
$2048\times512$ points
for CFL=0.4 and a different combination of techniques, involving
WENO5-RK3, WENO5-LW5 and WENO5-LWA5.

The results are presented as a Schlieren plot of the turbulence zone
and they are shown in Figure \ref{dmr}.

\begin{figure}[htb]
\centering
\begin{tabular}{cc}
\multicolumn{2}{c}{\includegraphics[width=0.45\textwidth]{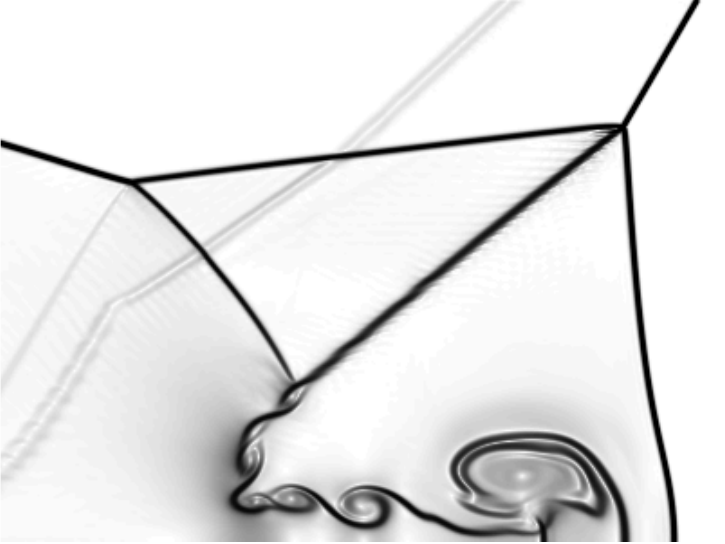}}\\
\multicolumn{2}{c}{(a) WENO5-RK3.}\\[1ex]
\includegraphics[width=0.45\textwidth]{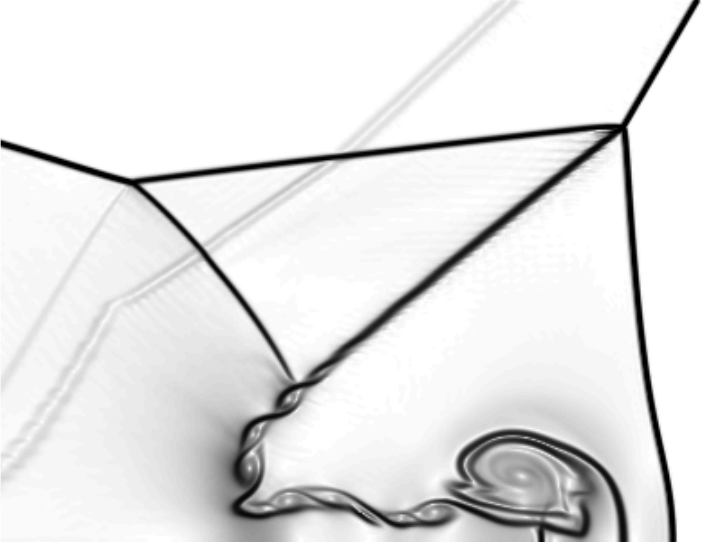} &
\includegraphics[width=0.45\textwidth]{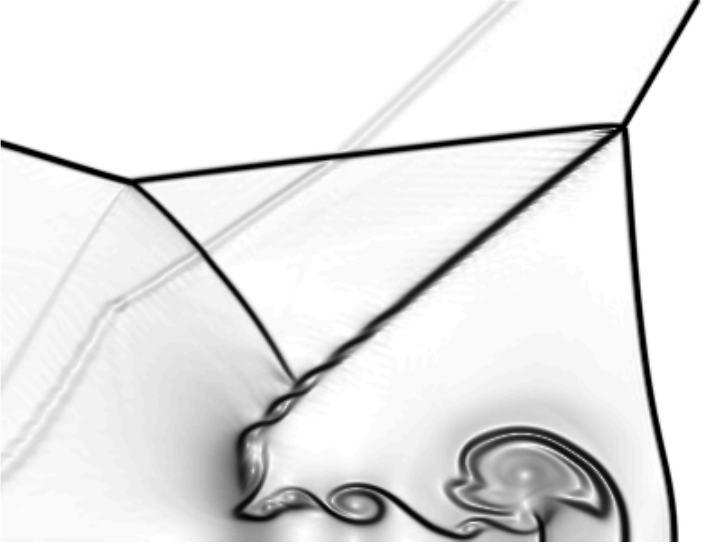} \\
(b) WENO5-LW5. & (c) WENO5-LWA5.
\end{tabular}
\caption{Double Mach Reflection. Pressure field.}
\label{dmr}
\end{figure}

From Figure \ref{dmr} it can be concluded that the results obtained
through the exact and approximate Lax-Wendroff-type techniques are again
quite similar.

Finally, in order to illustrate that the LW techniques are more
efficient than the RK3 time discretization, we show a performance test
involving the computational time required by each technique by running
the Double Mach Reflection problem for the resolution
$200\times50$. The results are shown in Table \ref{performance}, where
the field ``Efficiency'' stands for
$\displaystyle\frac{t_{\text{RK3}}}{t_{\text{LW*}}}$.
\begin{table}
  \centering
  \begin{tabular}{|c|c|}
    \hline
    Scheme & Efficiency \\
    \hline
    WENO5-LW5 & $1.44$ \\
    \hline
    WENO5-LWA5 & $1.54$ \\
    \hline
  \end{tabular}
  \caption{Performance table.}
  \label{performance}
\end{table}

We see that in this case both fifth order accurate in time techniques
are even faster than the third order accurate in time Runge-Kutta
method. Moreover, we see that in this case our proposed scheme with
approximate fluxes is faster than the exact formulation. On the
  other hand, it must be remarked that the CFL limits that we have
  empirically obtained for the LW and LWA 
  technique are  identical, therefore the
table entails a fair comparison between both techniques.

\section{Conclusions}\label{cnc}

In this paper we have presented an alternative to the Lax-Wendroff-type
procedure based on computing approximations of the time derivatives of
the fluxes rather than obtaining the exact expressions, which requires
symbolic manipulations in most of cases and yields a severe
computational cost as the order of the scheme increases, since its
growth rate is exponential.

We have proven both theoretically and
numerically that our scheme achieves the desired accuracy and that it
is conservative. Numerical experiments show that the flux
approximation procedure yields essentially the same results as the
exact formulation.

In terms of a comparison with the Runge-Kutta methods, the proposed
scheme keeps the same benefits of the original one as well, since only
one local characteristic decomposition is required per cell interface
and time step, yielding a better performance than the Runge-Kutta
methods, even when higher order accuracy in time is used, as it has been seen
in the numerical experiments section.

Therefore, as a concluding remark, we have obtained a scheme which
does require no computation of flux derivatives, yielding an easy
implementation, similar to the required for Runge-Kutta methods, as
well as a better performance in terms of computational cost in cases
where the exact time derivatives of the flux have large
expressions, which in turn keeps the same benefits of the original
Lax-Wendroff-type procedure. All the aforementioned benefits are combined,
as stated before, with the fact that quality of the numerical results
is essentially the same than those obtained with the exact procedure.

\appendix

\section{Appendix}
\label{appendixa}

For the sake of completeness, we prove
Theorem \ref{th:faadibruno} in this appendix, for we have not found
satisfactory references for its proof.

The following result is easily established.
\begin{lemma}\label{lemma:3}
Assume $T\colon \mathbb R^n \to \mathcal{M}(s, n)$ is differentiable
(equivalently, $T_{i_{1},\dots,i_{i_s}}$ are differentiable) and that
$A\colon \mathbb R\to \mathbb R^{n\times s}$,
$u:\mathbb{R}\rightarrow\mathbb{R}^n$ are also differentiable.
Then, $\forall x\in\mathbb{R}$
\begin{equation*}
  \frac{d}{dx}T(u(x))A(x)=T'(u(x))[u'(x)\ A(x)]+T(u(x))\sum_{j=1}^{s} d_j A(x),
\end{equation*}  
where we have used the notation
$d_j A(x)$ for the $n\times s$ matrix given by the columns:
\begin{equation*}
  (d_j A(x))_{k}=\begin{cases}
    A_{k}(x) & k\neq j\\
    A'_{j}(x) & k= j
  \end{cases}    
\end{equation*}
\end{lemma}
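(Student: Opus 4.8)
The plan is to reduce everything to scalar calculus by writing the tensor action in coordinates, as in \eqref{eq:77}, and then apply the ordinary Leibniz and chain rules. Writing $T_{i_1,\dots,i_s}$ for the components of $T$ (carrying, if needed, a suppressed free output index that does not interact with differentiation in $x$), the action reads
$$T(u(x))A(x)=\sum_{i_1,\dots,i_s=1}^{n}T_{i_1,\dots,i_s}(u(x))\,A_{i_1,1}(x)\cdots A_{i_s,s}(x).$$
Since this is a finite sum of products of scalar functions of $x$, each differentiable by hypothesis, the derivative exists and may be computed term by term.

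First I would apply the Leibniz rule to each summand, separating the derivative of the coefficient $T_{i_1,\dots,i_s}(u(x))$ from the derivatives of the $s$ factors $A_{i_j,j}(x)$. For the coefficient term, the chain rule gives
$$\frac{d}{dx}T_{i_1,\dots,i_s}(u(x))=\sum_{i_0=1}^{n}\frac{\partial T_{i_1,\dots,i_s}}{\partial u_{i_0}}(u(x))\,u'_{i_0}(x),$$
and reassembling the full sum over $i_0,i_1,\dots,i_s$ I would recognize the result, via the definition \eqref{eq:77} applied to the $(s+1)$-tensor $T'$, as the action of $T'(u(x))$ on the $n\times(s+1)$ matrix whose first column is $u'(x)$ and whose remaining columns are those of $A(x)$; that is exactly $T'(u(x))[u'(x)\ A(x)]$.

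For the factor terms, differentiating the $j$-th factor $A_{i_j,j}(x)$ leaves the other $s-1$ columns of $A$ untouched and replaces the $j$-th column by its derivative. Summing the index contractions back up identifies this contribution with $T(u(x))\,d_jA(x)$, with $d_jA$ the matrix defined in the statement, and summing over $j=1,\dots,s$ produces the second term $T(u(x))\sum_{j=1}^{s}d_jA(x)$. Collecting the two groups of terms yields the claimed identity.

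The only delicate point—and the step I would flag as the main (if minor) obstacle—is the bookkeeping that matches the coordinate expressions back to the compact tensor-action notation of \eqref{eq:77}: one must check that the fresh summation index $i_0$ introduced by the chain rule occupies the first slot of $T'$ and corresponds precisely to the adjoined first column $u'(x)$ of $[u'(x)\ A(x)]$, and that the column replacement defining $d_jA$ is consistent with contracting $T$ against $A'_{\cdot,j}$ in the $j$-th slot. Once these identifications are verified the computation is immediate, in agreement with the paper's assertion that the result is \emph{easily established}.
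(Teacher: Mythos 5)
Your proof is correct: expanding the tensor action in coordinates as in \eqref{eq:77}, applying the Leibniz rule across the product $T_{i_1,\dots,i_s}(u(x))A_{i_1,1}(x)\cdots A_{i_s,s}(x)$, and using the chain rule on the coefficient---with the fresh index $i_0$ contracted against the adjoined first column $u'(x)$ and the column-replacement terms identified with $d_jA(x)$---is a complete and valid argument. Note that the paper supplies no proof of this lemma at all (it is merely asserted to be ``easily established''), so your computation is precisely the routine verification the authors intended the reader to fill in, and it matches the conventions used later in the proof of Theorem~\ref{th:faadibruno}.
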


  We introduce some further notation for the proof of Theorem
  \ref{th:faadibruno}. 
For $s\in\mathbb N$, we denote
\begin{align*}
  \mathcal{P}_{s,j}&=\{ m\in\mathcal{P}_s /
m_j\neq 0 \}.
\end{align*}  
We denote also
\begin{equation*}
  S_0\colon \mathcal{P}_{s}\to \mathcal{P}_{s+1,1},\quad
  S_0(m)_k=
  \begin{cases}
    0 & k=s+1\\
    m_k& s\geq k\neq 1\\
    m_{1}+1 & k=1,
  \end{cases}    
\end{equation*}  
\begin{equation*}
  S_j\colon \mathcal{P}_{s,j}\to \mathcal{P}_{s+1,j+1},\quad
  S_j(m)_k=
  \begin{cases}
    0 & k=s+1\\
    m_k& s\geq k\neq j, j+1\\
    m_{j}-1 &s\geq  k=j\\
    m_{j+1}+1 &s\geq  k=j+1.
  \end{cases}    
\end{equation*}
for $1\leq j<s$,  and $S_s$ that maps $(0,\dots,0,1)\in\mathbb N^s$ to $(0,\dots,0,1)\in\mathbb N^{s+1}$.

\begin{proof}{(of Theorem \ref{th:faadibruno})}
We use induction on $s$, the case $s=1$ being the chain rule. By the
induction hypothesis for $s$ and Lemma
\ref{lemma:3} we deduce:
\begin{gather*}
  \frac{d^{s+1}f(u(x))}{dx^{s+1}} =
  \sum_{m\in\mathcal{P}_s} \mn \frac{d}{dx}\left(f^{(|m|)}(u(x)) D^m u(x)\right)\\
  =\sum_{m\in\mathcal{P}_s} \mn \big((f^{(|m|)})'(u(x)) [u'(x)\ D^m u(x)]+
  f^{(|m|)}(u(x)) \sum_{j=1}^{n}d_j D^m u(x)\big)\\
  =\sum_{m\in\mathcal{P}_s} \mn \big(f^{(|m|+1)}(u(x)) [u'(x)\ D^m u(x)]+
  f^{(|m|)}(u(x)) \sum_{j=1}^{n}d_jD^m u(x)\big).
\end{gather*}
Now,
\begin{equation*}
  d_j D^{m} u(x)=D^{S_j(m)} u(x) P E,
\end{equation*}  
where $P$ is a permutation matrix corresponding to the transposition of
$j$ and $\sum_{l\leq k} m_l$, with $\sum_{l< k} m_l < j \leq
\sum_{l\leq k} m_l$ and $E$ is a diagonal matrix with $k+1$
in the $\sum_{l\leq k} m_l$ entry and 1 in the rest.

 By the symmetry of $f^{(|m|)}$, if  $\sum_{l< k} m_l < j \leq
\sum_{l\leq k} m_l$
\begin{equation*}
  f^{(|m|)}(u(x)) d_jD^m u(x)=
  (k+1)
  f^{(|S_{k}(m)|}(u(x)) D^{S_{k}(m)} u(x),
\end{equation*}
therefore, collecting identical terms,
\begin{multline*}
    \frac{d^{s+1}f(u(x))}{dx^{s+1}} =
    \sum_{m\in\mathcal{P}_s} \mn \big(f^{(|S_0 (m)|)}(u(x)) D^{S_0
                                      (m)}u(x)\\
  +
    \sum_{j=1}^{n}f^{(|m|)}(u(x)) d_jD^{S_j (m)} u(x)\big)
  \end{multline*}
  can be written as
\begin{equation}\label{eq:4f}
  \begin{aligned}
        \frac{d^{s+1}f(u(x))}{dx^{s+1}} &=
    \sum_{m\in\mathcal{P}_s} \mn \big(f^{(|S_0 (m)|)}(u(x)) D^{S_0
      (m)}u(x)\\
    &+
    \sum_{k=1}^{n}  m_k  (k+1)   f^{(|S_{k}(m)|)}(u(x)) D^{S_{k}(m)} u(x)\big),
  \end{aligned}
\end{equation}
where we point out that in the last expression
  the only terms that actually appear are those for which
  $m_k>0$. Since $m_k-1=(S_{k}(m))_{k}$, by collecting the terms for $m,
  k$ such that $S_k(m)=\widehat m$, \eqref{eq:4f} can be written as
\begin{align}\label{eq:5f}
  \frac{d^{s+1}f(u(x))}{dx^{s+1}} =
  \sum_{\widehat m\in\mathcal{P}_{s+1}} 
  a_{\widehat m} f^{(|\widehat m|)}(u(x)) D^{\widehat m} u(x),
  \end{align}
  where
  \begin{equation}\label{eq:452}
      a_{\widehat m}=
\begin{cases}
\widetilde{a_{\widehat m}} & \mbox{if } \widehat{m_1}=0 \\
\widetilde{a_{\widehat m}}+\left[
    \begin{array}{c}
      s\\
      S_0^{-1}(\widehat{m})
    \end{array}      
    \right] & \mbox{if } \widehat{m_1}\neq0,
\end{cases}
              \quad
    \widetilde{a_{\widehat m}}=\sum_{\scriptsize\begin{array}{c}\widehat m=S_k (m), \\ k\in\{1,\dots,s\},\\
    m\in\mathcal{P}_{s,k}
  \end{array}
} \mn  m_k  (k+1). 
\end{equation}
For $k\in\{1,\dots,s\}$, and   $ m\in\mathcal{P}_{s,k}$, such that $\widehat m=S_k
(m)$, i.e., $\widehat m_i=m_i$, $i\neq k, k+1$, $\widehat m_{k}=m_{k}-1$,
$\widehat m_{k+1}=m_{k+1}+1$,  we deduce:
\begin{align*}
\mn  m_k  (k+1)&= \frac{s!}{m_1!\dots (m_{k}-1)! m_{k+1}!\dots
  m_s!}(k+1)\\
&=
\frac{s!}{\widehat m_1!\dots \widehat m_{k}! (\widehat m_{k+1}-1)!\dots
  \widehat m_s!}(k+1)\\
&=
\frac{s!}{\widehat m_1!\dots \widehat m_{k}! \widehat m_{k+1}!\dots \widehat m_s!}\widehat
m_{k+1}(k+1).
\end{align*}
Let $\widehat m=S_k (m)$ with  $k<s$, then  one has
$\widehat m_{s+1}=0$.  The only element $m\in\mathcal{P}_{s,s}$ is
$(0,\dots,0,1)\in\mathbb N^s$ and $S_s(m)=(0,\dots,0,1)\in\mathbb N^{s+1}$.
Therefore 
  \begin{align}
    \notag
    \widetilde{a_{\widehat m}}&=\frac{s!}{\widehat m_1!\dots \widehat  m_{s+1}!}\sum_{\scriptsize\begin{array}{c}\widehat m=S_k (m), \\ k\in\{1,\dots,s\},\\
    m\in\mathcal{P}_{s,k}
  \end{array}
} \widehat m_{k+1}(k+1)\\
    \label{eq:450}
    \widetilde{a_{\widehat m}}
&=\frac{s!}{\widehat m_1!\dots \widehat m_{s+1}!}\sum_{k=1}^{s} \widehat
m_{k+1}(k+1)
=\frac{s!}{\widehat m_1!\dots \widehat m_{s+1}!}\sum_{k=2}^{s+1} \widehat m_{k}k.
\end{align}

 On the other hand, if $\widehat{m}_1\neq0$, then:
 \begin{equation}\label{eq:451}
   \left[
    \begin{array}{c}
      s\\
      S_0^{-1}(\widehat{m})
    \end{array}      
    \right]
    =\frac{s!}{(\widehat{m}_1-1)!\widehat{m}_2!\cdots\widehat{m}_s!}
    =\frac{s!}{\widehat{m}_1!\widehat{m}_2!\cdots\widehat{m}_s!
      \widehat{m}_{s+1}!}\widehat{m}_1,
  \end{equation}
  where the last equality holds since, as before, we have
 $\widehat{m}_{s+1}=0$. Then, regardless of $\widehat{m}_1$, \eqref{eq:450} and
 \eqref{eq:451} yield for $\widehat m\in\mathcal{P}_{s+1}$
  \begin{equation}\label{eq:453}
    a_{\widehat m}=\frac{s!}{\widehat m_1!\dots \widehat m_{s+1}!}\sum_{k=1}^{s+1} \widehat
    m_{k}k=\frac{s!}{\widehat m_1!\dots\widehat m_{s+1}!}(s+1)=\mna{s+1}{\widehat m},
  \end{equation}
  since $\widehat m\in\mathcal{P}_{s+1}$ means $\sum_{k=1}^{s+1} \widehat
m_{k}k=s+1$. We deduce from \eqref{eq:5f}, \eqref{eq:452} and \eqref{eq:453}
that
\begin{align*}
  \frac{d^{s+1}f(u(x))}{dx^{s+1}} =\sum_{\widehat m\in \mathcal{P}_{s+1}}
\mna{s+1}{\widehat m}
    f^{(|\widehat m|)}(u(x)) D^{\widehat m}u(x),
  \end{align*}
  which concludes the proof by induction.
\end{proof}

\section{Appendix}
\label{appendixb}
We include here the proof of Proposition \ref{prop:1}.
\begin{proof}
  For the accuracy analysis of the local truncation error, we take
  \begin{equation}\label{eq:45}
\widetilde u^{(0)}_{i,n}=u(x_i, t_n).
\end{equation}
  We now use induction on $k=1,\dots,R$ to prove that
  \begin{align}
    \label{eq:47}
\widetilde u^{(k)}_{i,n}&= u^{(k)}_{i,n}+c^{k}(x_i, t_n)h^{R-k+1}+\bigO(h^{R-k+2}),
\end{align}
for continuously differentiable functions $c^k$. The result  in
\eqref{eq:47} for $k=1$ immediately follows from the fact that
WENO finite differences  applied to the exact data in \eqref{eq:45}
yield approximations 
  \begin{equation*}
    \frac{\hat f_{i+\mig,n}-\hat f_{i-\mig,n}}{h}=f(u)_{x}(x_{i},
    t_n)+\widetilde c^{1}(x_i, t_n)h^{2r-1}+\bigO(h^{2r}),\quad r=\nu.
  \end{equation*}
  From the definition in \eqref{eq:151} we deduce
  \begin{equation*}
    \widetilde u^{(1)}_{i,n}=-\frac{\hat f_{i+\mig,n}-\hat
      f_{i-\mig,n}}{h}=u^{(1)}_{i,n}-\widetilde c^{1}(x_i,
    t_n)h^{2r-1}+\bigO(h^{2r}), \quad 2r\geq R+1,
  \end{equation*}
  thus proving the case $k=1$, by taking $c^1=-\widetilde c^1$ if
  $2r=R+1$ or $c^1=0$ if $2r>R+1$.

Assume now the result to hold for $k$ and aim to prove it for
$k+1\leq R$. For this purpose we first prove the following estimate:
  \begin{align}
    \label{eq:46}
\widetilde f^{(k)}_{i,n}&= f^{(k)}_{i,n}+a^k(x_i, t_n)h^{R-k}+b^k(x_i, t_n)h^{R-k+1}+\bigO(h^{R-k+2}),
\end{align}
for  continuously differentiable functions $a^k, b^k$.

From
\eqref{eq:15} and \eqref{eq:3}, with
$q=\left\lceil   \frac{R-k}{2}\right\rceil$
and the notation $v=T_k[h, i,     n]$, for fixed $i,n$: 
  \begin{align}\label{eq:1}
    \widetilde f^{(k)}_{i,n} &=
    (f(v))^{(k)}(0)+\alpha^{k,q}    (f(v))^{(k+2q)}(0)h^{2q} + \bigO(h^{2q+2}).
  \end{align}
  Now, Fa\`a di Bruno's formula \eqref{eq:faadibruno} and the fact
  that  \eqref{eq:taylor} implies $v^{(p)}(0)=\widetilde u^{(p)}_{i,n}$, $p=1,\dots,k$, yields:
\begin{equation}     \label{eq:5}
   \begin{aligned}
    (f(v))^{(k)}(0)&=\sum_{s\in \mathcal{P}_{k}}
    \left(
      \begin{array}{c}
        k\\
        s
      \end{array}
    \right)
    f^{(|s|)}(v(0))       \big(D^{s} v(0)\big),\\
    D^{s} v (0)&=
    \begin{bmatrix}
      \overbrace{
        \begin{array}{ccc}
          \frac{v^{(1)}(0)}{1!}
          &\dots&
          \frac{v^{(1)}(0)}{1!}
        \end{array}
      }^{s_1}
      &\dots&
      \overbrace{
        \begin{array}{ccc}
          \frac{v^{(k)}(0)}{k!}
          &\dots&
          \frac{v^{(k)}(0)}{k!}
        \end{array}
      }^{s_k}
    \end{bmatrix}=      \\
   &= 
    \begin{bmatrix}
      \overbrace{
        \begin{array}{ccc}
          \frac{\widetilde u_{i,n}^{(1)}}{1!}
          &\dots&
          \frac{\widetilde u_{i,n}^{(1)}}{1!}
        \end{array}
      }^{s_1}
      &\dots&
      \overbrace{
        \begin{array}{ccc}
          \frac{\widetilde u_{i,n}^{(k)}}{k!}
          &\dots&
          \frac{\widetilde u_{i,n}^{(k)}}{k!}
        \end{array}
      }^{s_k},
    \end{bmatrix}      
  \end{aligned}       
\end{equation}
where we remark that $D^{s} v (0)$ is an $m\times |s|$ matrix formed
by the columns $\frac{\widetilde u_{i,n}^{(p)}}{p!}$ appearing in the previous
expression. 
Since $v=T_k[h, i, n]$ is a $k$-th degree
polynomial, $v^{(j)}=0$  for $j>k$. Therefore, in the same
fashion as before,  
\begin{align}
    \label{eq:57}
    &(f(v))^{(k+2q)}(0)=\sum_{s\in \mathcal{P}^{k}_{k+2q}}
    \left(
      \begin{array}{c}
        k\\
        s
      \end{array}
    \right)
    f^{(|s|)}(v(0))       \big(D^{s} v(0)\big)
  \end{align}
  where $\mathcal{P}^{k}_{k+2q}=\{s\in \mathcal{P}_{k+2q} / s_j=0, j>k\}$.
  
  On the other      hand, another application of Fa\`a di Bruno's
  formula to $f(u)$, yields:
  \begin{equation}\label{eq:4}
  \begin{aligned}
    f_{i,n}^{(k)}&=f(u)^{(k)}(x_i, t_n)=
    \sum_{s\in \mathcal{P}_{k}}
    \left(
      \begin{array}{c}
        k\\
        s
      \end{array}
    \right)
    f^{(|s|)}(u(x_i, t_n))       \big(D^{s} u(x_i, t_n)\big)\\
    D^{s} u (x_i, t_n)&=
    \begin{bmatrix}
      \overbrace{
        \begin{array}{ccc}
          \frac{u^{(1)}(x_i, t_n)}{1!}
          &\dots&
          \frac{u^{(1)}(x_i, t_n)}{1!}
        \end{array}
      }^{s_1}
      &\dots&
      \overbrace{
        \begin{array}{ccc}
          \frac{u^{(k)}(x_i, t_n)}{k!}
          &\dots&
          \frac{u^{(k)}(x_i, t_n)}{k!}
        \end{array}
      }^{s_k}
    \end{bmatrix}      \\
    &=
    \begin{bmatrix}
      \overbrace{
        \begin{array}{ccc}
          \frac{u_{i,n}^{(1)}}{1!}
          &\dots&
          \frac{u_{i,n}^{(1)}}{1!}
        \end{array}
      }^{s_1}
      &\dots&
      \overbrace{
        \begin{array}{ccc}
          \frac{u_{i,n}^{(k)}}{k!}
          &\dots&
          \frac{u_{i,n}^{(k)}}{k!}
        \end{array}
      }^{s_k}
    \end{bmatrix}      
  \end{aligned}
\end{equation}
We have $ v(0)=\widetilde
  u_{i,n}^{(0)}=u(x_i, t_n)$ and, by induction,
  \begin{equation}\label{eq:90}
    \widetilde
  u_{i,n}^{(l)}=u_{i,n}^{(l)}+c^{l}(x_{i},
  t_{n})h^{R-l+1}+\bigO(h^{R-l+2}),\quad l=1,\dots,k.
\end{equation}
For any $s\in\mathcal{P}_{k}$, $D^{s}v(0)$ is a
  $m\times |s|$ matrix, and  for any
  $\mu\in\{1,\dots,m\}$ and 
  $\nu\in\{1,\dots,|s|\}$, we have from \eqref{eq:ds}, \eqref{eq:5},
  \eqref{eq:4} and \eqref{eq:90} that 
  \begin{equation}\label{eq:78}
(D^s  v(0)-D^s u(x_i, t_n))_{\mu, \nu} = \frac{(\widetilde
    u_{i,n}^{(l)}-u_{i,n}^{(l)})_{\mu}}{l!} = \frac{c^l_{\mu}(x_i,
    t_n)}{l!} h^{R-l+1} + 
\bigO(h^{R-l+2}),
\end{equation}
for some $l=l(s, \nu)\leq k$. From the definition of the set
$\mathcal{P}_k$, the only $k$-tuple $s\in \mathcal{P}_{k}$ such that
$s_k\neq 0$ is $s^*=(0,\dots,1)$. Therefore, from the definition
of the operator
$D^s$ in \eqref{eq:ds} (or \eqref{eq:4} \eqref{eq:5}), the only $s\in\mathcal{P}_{k}$, $\nu\leq |s|$,
such that $l(s, \nu)=k$ is $s^*$, $\nu=|s^*|=1$. We deduce from \eqref{eq:78} that
\begin{align}\label{eq:79}
(D^s  v(0)-D^s u(x_i, t_n))_{\mu, \nu}=\bigO(h^{R-k+2}), \quad \forall 
s\in\mathcal{P}_s, s\neq s^*, \forall \mu\leq m, \forall 
  \nu\leq |s|\\
  \label{eq:80}
(D^{s^*}  v(0)-D^{s^*} u(x_i, t_n))_{\mu, 1}=\frac{c^k_{\mu}(x_i,
    t_n)}{k!} h^{R-k+1} + 
\bigO(h^{R-k+2}),
\end{align}  
We deduce from \eqref{eq:78}, \eqref{eq:77},
\eqref{eq:5}, \eqref{eq:4}, \eqref{eq:79} that
  \begin{align}\notag
    f(v)^{(k)}(0)-f(u)^{(k)}(x_i, t_n)&=
    \left(
      \begin{array}{c}
        k\\
        s^*
      \end{array}
    \right)
    f^{(|s^*|)}(u(x_i, t_n))       \big(D^{s^*} v(0)-D^{s^*} u(x_i,
    t_n)\big)  \\
    \notag
    &+
    \sum_{s\in \mathcal{P}_{k}, s\neq s^*}
    \left(
      \begin{array}{c}
        k\\
        s
      \end{array}
    \right)
    f^{(|s|)}(u(x_i, t_n))       \big(D^{s} v(0)-D^{s} u(x_i,
    t_n)\big)\\
    \label{eq:82}
    &=\sum_{\mu=1}^{m}\frac{\partial f}{\partial u_{\mu}}(u(x_i,
    t_n))c^k_{\mu}(x_i,    t_n) h^{R-k+1} + \bigO(h^{R-k+2}), 
  \end{align}
  where we have collected the order $R-k+1$ leading terms originating
  from the first term associated to the $k$-tuple $s^*=(0,\dots,0,1)$.
  With a similar argument, taking into account that $k+1\leq R$, we
  deduce from \eqref{eq:57} and \eqref{eq:78} that 
  \begin{equation}\label{eq:81}
    \begin{aligned}
    (f(v))^{(k+2q)}(0)&=e^{k,q}(x_i, t_n)+\bigO(h^{R-k+1})=e^{k,q}(x_i,
    t_n)+\bigO(h^2),\\
    e^{k,q}(x, t)&=\sum_{s\in \mathcal{P}^{k}_{k+2q}}
    \left(
      \begin{array}{c}
        k\\
        s
      \end{array}
    \right)
    f^{(|s|)}(v(0))       \big(D^{s} u(x, t)\big).
  \end{aligned}
\end{equation}
  
  Now, \eqref{eq:1}, \eqref{eq:4}, \eqref{eq:81} and \eqref{eq:82} yield:
  \begin{equation*}
 \widetilde f^{(k)}_{i,n}
 -f^{(k)}_{i,n}=\sum_{\mu=1}^{m}\frac{\partial f}{\partial
   u_{\mu}}(u(x_i, t_n))c^k_{\mu}(x_i,    t_n) h^{R-k+1} +
 \bigO(h^{R-k+2}) + e^{k,q}(x_i, t_n)h^{2q}+\bigO(h^{2q+2}).
\end{equation*}
Since $2q= R-k$ or $2q= R-k+1$, we deduce \eqref{eq:46} with
\begin{align*}
  a^{k}(x, t)&=\begin{cases}
    e^{k}(x, t) & 2q=R-k\\
    0& 2q=R-k+1\\
  \end{cases}\\
  b^{k}(x, t)&=\begin{cases}
    \sum_{\mu=1}^{m}\frac{\partial f}{\partial
   u_{\mu}}(u(x, t))c^k_{\mu}(x,    t) & 2q=R-k\\
    \sum_{\mu=1}^{m}\frac{\partial f}{\partial
   u_{\mu}}(u(x, t))c^k_{\mu}(x,    t) + e^{k, q}(x, t)& 2q=R-k+1.
  \end{cases}
\end{align*}

To prove \eqref{eq:47} for $k+1$, we apply the linear operator
  $-\Delta_h^{1,q}$, for $q=\left\lceil\frac{R-k}{2}\right\rceil$,
  to both sides  of  the already established equality \eqref{eq:46},
  taking into account \eqref{eq:3} and that $2q\geq R-k$:
  \begin{align*}
    \widetilde
    u^{(k+1)}_{i,n}&=-\Delta_h^{1,q}
    \widetilde f^{(k)}_{i+\cdot,n} \\
    &=-\Delta_h^{1,q}
    f_{i+\cdot,n}^{(k)}
    -h^{R-k}\Delta_h^{1,q}G_{x_i,h}(a^k(\cdot,    t_n))
    -h^{R-k+1}\Delta_h^{1,q}G_{x_i,h}(b^k(\cdot,    t_n))
    +\bigO(h^{R-k+1}) 
    \\
    &=-\Delta_h^{1,q}
    G_{x_i,h}\big(f(u)^{(k)}(\cdot, t_n)\big)
    -h^{R-k}(\frac{\partial a^k}{\partial x}(x_i, t_n)+
    \bigO(h^{2q}))\\
    &-h^{R-k+1}(\frac{\partial b^k}{\partial x}(x_i,t_n)+
    \bigO(h^{2q}))
    +\bigO(h^{R-k+1})
    \\
    &=-[f(u)^{(k)}]_{x}(x_i,
    t_n)-\alpha^{1,q}\frac{\partial^{k+2q+1}f(u)}{\partial
      x^{2q+1}\partial t^{k}}(x_i, t_n) h^{2q}+
\bigO(h^{2q+2})\\
    &-h^{R-k}\frac{\partial a^k}{\partial x}(x_i, t_n)    +\bigO(h^{R-k+1})
    \\
    &=u^{(k+1)}(x_i,  t_n)+c^{k+1}(x_i, t_n)h^{R-k}+\bigO(h^{R-k+1}),
  \end{align*}
  where
  \begin{equation*}
  c^{k+1}(x, t)=-\frac{\partial a^k}{\partial x}(x, t)-
  \begin{cases}
    0 & 2q > R-k\\
    \alpha^{1,q}\frac{\partial^{k+2q+1}f(u)}{\partial
      x^{2q+1}\partial t^{k}}(x, t) & 2q=R-k.
  \end{cases}    
\end{equation*}

The local truncation error is given by
  \begin{align*}
    &u_{i,n+1}^{(0)}-    
    \sum_{l=0}^R\frac{ (\Delta t)^l}{l!}\widetilde u_{i,n}^{(l)},
  \end{align*}
  where $\widetilde u_{i,n}^{(l)}$ are computed from
  $\widetilde u_{i,n}^{(0)}=u(x_i, t_n)$. Taylor expansion of the first
  term and the estimates in \eqref{eq:47} yield that the local
  truncation error is: 
  \begin{align*}
    &\sum_{l=1}^{R}\frac{(\Delta t)^l }{l!}(u_{i,n}^{(l)}
    -\widetilde u_{i,n}^{(l)})+\bigO(h^{R+1})\\
    &=\sum_{l=1}^{R}\frac{(\Delta t)^l }{l!}\bigO(h^{R-l+1})+\bigO(h^{R+1}) =\bigO(h^{R+1}),
  \end{align*}
  since $\Delta t$ is proportional to $h$.\qed
\end{proof}

\section*{Acknowledgments}
 This research was partially
  supported by Spanish MINECO grants    MTM 2011-22741 and MTM 2014-54388-P.


\begin{thebibliography}{10}
\providecommand{\url}[1]{{#1}}
\providecommand{\urlprefix}{URL }
\expandafter\ifx\csname urlstyle\endcsname\relax
  \providecommand{\doi}[1]{DOI~\discretionary{}{}{}#1}\else
  \providecommand{\doi}{DOI~\discretionary{}{}{}\begingroup
  \urlstyle{rm}\Url}\fi

\bibitem{DonatMarquina96}
Donat, R., Marquina, A.: Capturing shock reflections: An improved flux formula.
\newblock J. Comput. Phys. \textbf{125}, 42--58 (1996)


\bibitem{faadibruno1857}
Faà~di Bruno, C.F.: Note sur un nouvelle formule de calcul différentiel.
\newblock Quart. J. Math. \textbf{1}, 359--360 (1857)




\bibitem{Hairer1993}
Hairer, E., N\o rsett, S. P., Wanner, G.: Solving Ordinary Differential Equations I. 
\newblock Springer, second edition (1993)

\bibitem{Harten1987}
Harten, A., Engquist, B., Osher, S., Chakravarthy, S.R.: Uniformly high order
  accurate essentially non-oscillatory schemes, {III}.
\newblock J. Comput. Phys. \textbf{71}(2), 231--303 (1987)
 
 
 \bibitem{Hickernell2008} 
 Hickernell, F. J., Yang, S.: Explicit hermite interpolation polynomials via the cycle index with applications.
\newblock Int. J. Numer. Anal. Comp., \textbf{5} (3), 457--465 (2008)
 
\bibitem{JiangShu96}
Jiang, G.S., Shu, C.W.: Efficient implementation of {Weighted} {ENO} schemes.
\newblock J. Comput. Phys. \textbf{126}, 202--228 (1996)

\bibitem{QiuShu2003}
  Qiu, J., Shu, C.W.: Finite difference WENO schemes with
  {L}ax-{W}endroff-type time discretizations.
\newblock SIAM J. Sci. Comput. \textbf{24}(6), 2185--2198 (2003)

\bibitem{ShuOsher1988}
Shu, C.W., Osher, S.: Efficient implementation of essentially non-oscillatory
  shock-capturing schemes.
\newblock J. Comput. Phys. \textbf{77}, 439--471 (1988)

\bibitem{ShuOsher1989}
Shu, C.W., Osher, S.: Efficient implementation of essentially non-oscillatory
  shock-capturing schemes, {II}.
\newblock J. Comput. Phys. \textbf{83}(1), 32--78 (1989)

\bibitem{TiburceAbadie1850}
Tiburce Abadie, J.C.F.: Sur la différentiation des fonctions de
fonctions.
\newblock Nouvelles annales de mathématiques, journal des candidats
aux écoles polytechnique et normale. \textbf{9}(1), 119--125 (1850)

\bibitem{TiburceAbadie1852}
Tiburce Abadie, J.C.F.: Sur la différentiation des fonctions de fonctions. Séries de Burmann, de Lagrange, de Wronski.
\newblock Nouvelles annales de mathématiques, journal des candidats
aux écoles polytechnique et normale. \textbf{11}(1), 376--383 (1852)

\bibitem{Toro2009}
E.~F. Toro.
\newblock {\em {R}iemann solvers and numerical methods for fluid dynamics}.
\newblock Springer-Verlag, third edition (2009)

\bibitem{You2014}  
\newblock You, X., Zhao, J., Yang, H., Fang, Y., Wu, X.: Order conditions for RKN methods solving general second-order oscillatory systems.
\newblock Numer. Algor. \textbf{66}, 147--176 (2014)

\end{thebibliography}
\end{document}